\date{\today}
\newtheorem{theorem}{Theorem}[section]
\newtheorem{lemma}[theorem]{Lemma}
\theoremstyle{definition}
\newtheorem{definition}[theorem]{Definition}
\newtheorem{example}[theorem]{Example}
\newtheorem{remark}[theorem]{Remark}
\title{{\bf Linear programming bounds for covering radius of spherical designs }}
\author{}
\begin{document}
\maketitle

\vspace*{-8mm}

\noindent
Peter Boyvalenkov,
Institute of Mathematics and Informatics,
Bulgarian Academy of Sciences,
8 G.Bonchev str., 1113 Sofia, BULGARIA,
{\tt peter@math.bas.bg} \\[2pt]
Maya Stoyanova,
Faculty of Mathematics and Informatics,
Sofia University ''St. Kliment Ohridski''
5 James Baucher blvd, Sofia, BULGARIA, {\tt
stoyanova@fmi.uni-sofia.bg}  \\[2pt]

\begin{abstract} \noindent
We apply polynomial techniques (linear programming) to obtain lower and upper bounds on the covering
radius of spherical designs as function of their dimension, strength, and cardinality.
In terms of inner products we improve the lower bounds due to Fazekas and Levenshtein and propose new upper bounds.
Our approach to the lower bounds involves certain signed measures whose corresponding series of orthogonal polynomials
are positive definite up to a certain (appropriate) degree. Upper bounds are based on a geometric observation
and more or less standard linear programming techniques.
\keywords{Spherical designs \and Covering radius \and Linear programming }
\end{abstract}
\setcounter{equation}{0}

\section{Introduction}
\label{intro}
Spherical designs were introduced in 1977 by Delsarte-Goethals-Seidel \cite{DGS}.

\begin{definition} \label{def-designs}
A spherical $\tau$-design $C \subset \mathbb{S}^{n-1}$ is a finite subset of $\mathbb{S}^{n-1}$ such that
\[ \frac{1}{\mu(\mathbb{S}^{n-1})} \int_{\mathbb{S}^{n-1}} f(x) d\mu(x) =
                  \frac{1}{|C|} \sum_{x \in C} f(x) \]
($\mu(x)$ is the Lebesgue measure) holds for all polynomials $f(x) = f(x_1,x_2,\ldots,x_n)$ of degree at most $\tau$
(i.e. the average of $f$ over the set is equal to the average of $f$ over $\mathbb{S}^{n-1}$).
\end{definition}

The maximal possible $\tau=\tau(C)$ is called strength of $C$. It is convenient to assume that the vectors of
$C$ span $\mathbb{R}^n$ (otherwise we can reduce the dimension as necessary).

\begin{definition} \label{def-covrad}
Let $C \subset \mathbb{S}^{n-1}$ be a finite set (spherical design in our applications).
For a fixed point $y \in \mathbb{S}^{n-1}$ the distance between $y$ and $C$ is defined in the usual way by
\[ d(y,C) := \min \{d(y,x): x\in C\}. \]
Then the covering radius of $C$ is
\[ r(C) := \max \{d(y,C): y \in \mathbb{S}^{n-1}\}. \]
\end{definition}

We consider the equivalent quantity
\[ \rho(C) := 1 - \frac{r^2(C)}{2} = \min_{y \in \mathbb{S}^{n-1}} \max_{x \in C} \{ \langle x,y \rangle \}. \]
For fixed dimension $n$, strength $\tau$ and cardinality $|C|$ we obtain estimations for $\rho(C)$ by polynomial techniques.
The following equivalent definition is an important source for investigations of the structure of spherical designs (see \cite{FL,BBD,BBKS}).

\begin{definition} \label{def-des}
A code $C \subset \mathbb{S}^{n-1}$ is a spherical $\tau$-design if and only if for any point $y \in \mathbb{S}^{n-1}$ and any real
polynomial $f(t)$ of degree at most $\tau$, the equality
\begin{equation}
\label{defin_f}
\sum_{x \in C}f(\langle x,y \rangle ) = f_0|C|
\end{equation}
holds, where $f_0$ is the first coefficient in the Gegenbauer expansion $f(t)=\sum_{i=0}^k f_i P_i^{(n)}(t)$.
Here $P_i^{(n)}(t)$, $i \geq 0$, are Gegenbauer polynomials \cite{Sze} normalized by $P_i^{(n)}(1)=1$.
\end{definition}

Linear programming bounds for covering radius of spherical designs were obtained by Fazekas and Levenshtein \cite[Theorem 2]{FL}
(see also \cite{Yud95}). They prove that if $C$ is a $(2k-1+e)$-design, $e \in \{0,1\}$, then
\begin{equation}
\label{FL_bound}
\rho(C) \geq t_{FL} = t_k^{0,e},
\end{equation}
where $t_k^{0,e}$ is the largest zero of the Jacobi polynomial $P^{(\alpha,\beta)}(t)$, $\alpha = \frac{n-3}{2}$,
$\beta = \frac{n-3}{2}+e$ \cite{Sze}. For example, \eqref{FL_bound} gives $R(n,4,M) \geq \frac{1+\sqrt{n+3}}{n+2}$
for every $M \geq D(n,4)=n(n+3)/2$. Note that the Fazekas-Levenshtein bound does not depend on the cardinality $M$.
The bound (\ref{FL_bound}) for even $\tau=2k$ is attained if $C$ is a tight spherical $(2k)$-design, i.e. a spherical $(2k)$-design on $\mathbb{S}^{n-1}$ of cardinality $D(n,2k)$ (see \eqref{DGS-bound} below). In this case the covering radius is realized by each point of $\mathbb{S}^{n-1}$ whis is antipodal to a point of $C$.

We remark that tight spherical $(2k)$-designs, $\tau \geq 4$, could possibly exist for $\tau=2$ and 4 only (see \cite{BD1,BD2,BMV04,NV12}). If $C \subset \mathbb{S}^{n-1}$ is a tight spherical $4$-design, then $n=(2m+1)^2-3$,
where $m$ is a positive integer. Examples are known for $m=1$ and $m=2$ only.

In this paper we obtain lower and upper bounds on the covering radius $\rho(C)$ of designs with fixed dimension $n$, even strength $2k$,
and cardinality. While the Fazekas-Levenshtein bound \eqref{FL_bound} depends on the strength only, our bounds are also function of the cardinality of the designs of considered dimension and strength. To this end, we apply in appropriate combinations polynomials
in Definition \ref{def-des}. Our approach for lower bounds requires certain signed measure which is positive definite up to
certain degree and therefore defines a sequence of orthogonal polynomials which appears good for our purposes.
Finally in this description, we underline that we investigate the structure of $C$ with respect to point(s) where the
covering radius $\rho(C)$ is attained. Our upper bounds techniques is applied for odd strengths as well.

In Section \ref{sec:1} we introduce the Delsarte-Goethals-Seidel bound, some notation and the required signed measure.
The properties of the generated series of orthogonal polynomials are studied in Section \ref{sec:2}. Section \ref{sec:3} is devoted to
the lower bounds where different techniques are applied depending how close to $-1$ is the smallest inner product
of a point where the covering radius is attained. Upper bounds are obtained in Section \ref{sec:4}. Examples of new bounds are
shown in tables in Sections \ref{sec:3} and \ref{sec:4}.

\section{Preliminaries}
\label{sec:1}

\subsection{Delsarte-Goethals-Seidel bound}
\label{sec:1.1}

For fixed dimension $n \geq 2$ and strength $\tau \geq 1$ the minimum cardinality of
a spherical $\tau$-design $C \subset \mathbb{S}^{n-1}$ is bounded from below by
Delsarte-Goethals-Seidel \cite{DGS} as follows:
\begin{equation}
\label{DGS-bound}
|C| \geq D(n,\tau) := {n+k-2+e \choose n-1}+{n+k-2 \choose n-1},
\end{equation}
where $\tau=2k-1+e$, $e \in \{0,1\}$. This bound \eqref{DGS-bound} is rarely attained.
Improvements were obtained \cite{Boy1,Yud97,NN}
but it is still unknown, for example, if there exist spherical 4-designs of 10 points on $\mathbb{S}^2$.
On the other hand, it was shown by Bondarenko, Radchenko and Viazovska \cite{BRV13,BRV15} that for fixed
dimension $n$ and strength $\tau$ there exist spherical $\tau$-designs on $\mathbb{S}^{n-1}$ for any
cardinality $N \geq C_n \tau^{n-1}$, where the constant $C_n$ depends on the dimension $n$ only.

\subsection{Notations for the structure of spherical designs}
\label{sec:1.2}

We are interested in the structure of designs whose cardinality is close to the bound \eqref{DGS-bound}.
Apart from the interest here, we believe that our approach could be useful for proving nonexistence
results (see \cite{BBD}).

Let $C \in \mathbb{S}^{n-1}$ be a spherical design. For arbitrary point $y \in \mathbb{S}^{n-1}$, we consider the (multi)set
\[ I(y) = \{ \langle x,y \rangle : x \in C\}
     = \{ t_1(y),t_2(y),\ldots,t_{|C|}(y) \}, \]
where we order $I(y)$ by $-1 \leq t_1(y) \leq t_2(y) \leq \cdots \leq t_{|C|}(y) \leq
1$. Note that $t_{|C|}(y)=1-d^2(y,C)/2$ and, in particular, $t_{|C|}(y)=1 \iff y \in C$.
In what follows we always assume that $y$ is a point on $\mathbb{S}^{n-1}$
where the covering radius is realized, in particular $t_{|C|}(y)=\rho(C)$ (see Lemma \ref{last-n} below for a stronger result).

The tight spherical $2k$-designs have $t_1(y)=-1$ for every $y$ realizing the covering radius \cite[Theorem 3]{FL}. Therefore, it is natural and important to
investigate how close is $t_1(y)$ to $-1$ at least for cardinalities which are close to the Delsarte-Goethals-Seidel bound $D(n,2k)$.
We find convenient to consider two cases: $t_1(y) \in [-1,\ell]$ and $t_1(y) \geq \ell$ for some $\ell > -1$.

We will utilize the following geometric observation on the structure of $I(y)$.

\begin{lemma} \label{last-n}
If $y$ is a point on $\mathbb{S}^{n-1}$ where
the covering radius is realized, then
\[ t_{|C|}(y)=t_{|C|-1}(y)=\cdots=t_{|C|-n+1}(y)=\rho(C). \]
\end{lemma}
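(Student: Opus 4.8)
The plan is to argue by contradiction using a perturbation (variational) argument on the point $y$ where the covering radius is realized. Suppose that strictly fewer than $n$ of the inner products $t_i(y)$ attain the maximal value $\rho(C)$; that is, the set of points of $C$ that are "closest to the outer boundary" in the sense $\langle x,y\rangle = \rho(C)$, call it $C_y := \{x \in C : \langle x,y\rangle = t_{|C|}(y)\}$, has cardinality $|C_y| \le n-1$. I would first record the elementary fact that $\rho(C) = \min_{z\in\mathbb{S}^{n-1}} \max_{x\in C}\langle x,z\rangle$, so $y$ is a minimizer of the convex-analytic function $g(z) := \max_{x\in C}\langle x,z\rangle$ restricted to the sphere. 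The function $g$ is a maximum of linear functionals, hence convex on $\mathbb{R}^n$, and its subdifferential at $y$ is $\partial g(y) = \operatorname{conv}\{x : x\in C_y\}$.

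The key step is the optimality condition. Since $y$ minimizes $g$ on $\mathbb{S}^{n-1}$, there is no feasible descent direction: for every tangent vector $v$ with $\langle v,y\rangle = 0$, the one-sided directional derivative satisfies $g'(y;v) = \max_{x\in C_y}\langle x,v\rangle \ge 0$ (a first-order expansion of the spherical constraint shows that moving along a geodesic in direction $v$ changes $g$ to first order exactly by $g'(y;v)$, since the second-order correction from the sphere only pulls $z$ toward the origin and decreases all inner products). Equivalently, $\max_{x\in C_y}\langle x,v\rangle \ge 0$ for all $v \perp y$, which by a standard separation/minimax argument forces $0 \in \operatorname{conv}\{\, \Pi_y x : x\in C_y \,\}$, where $\Pi_y$ is orthogonal projection onto $y^\perp$. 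Now if $|C_y| \le n-1$, the projected points $\Pi_y x$, $x\in C_y$, lie in the $(n-1)$-dimensional space $y^\perp$ and number at most $n-1$, so their convex hull is contained in an affine subspace; but one checks it cannot contain $0$ unless the $x\in C_y$ are affinely dependent in a way that lets us split off a direction. The clean way: choose $v\perp y$ orthogonal to every $\Pi_y x$ with $x\in C_y$ — possible since these span a space of dimension at most $n-2$ inside the $(n-1)$-dimensional $y^\perp$. Then $\langle x,v\rangle = 0$ for all $x\in C_y$, while for $x\notin C_y$ we have $\langle x,y\rangle < \rho(C)$ strictly. Moving $y$ slightly in the direction $v$ (along the sphere) keeps $\langle x,\cdot\rangle = \rho(C)+O(\varepsilon^2)$ for $x\in C_y$ and keeps $\langle x,\cdot\rangle < \rho(C)$ for the finitely many remaining $x$ for small $\varepsilon$; a careful bookkeeping of the $O(\varepsilon^2)$ terms (the sphere correction lowers the value, which is in our favor) shows $g$ strictly decreases, contradicting minimality of $\rho(C)$.

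Carrying this out, I would be explicit about the geodesic $z(\varepsilon) = y\cos\varepsilon + v\sin\varepsilon$ with $|v|=1$, $v\perp y$: then $\langle x,z(\varepsilon)\rangle = \langle x,y\rangle\cos\varepsilon + \langle x,v\rangle\sin\varepsilon$. For $x\in C_y$ with $\langle x,v\rangle=0$ this equals $\rho(C)\cos\varepsilon < \rho(C)$ for $\varepsilon\in(0,\pi)$ — already a strict decrease. For $x\notin C_y$, continuity and finiteness give $\langle x,z(\varepsilon)\rangle < \rho(C)$ for all small $\varepsilon$. Hence $g(z(\varepsilon)) < \rho(C)$, so $r(C)$ was not the covering radius — contradiction. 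Therefore $|C_y|\ge n$, i.e. at least $n$ of the ordered inner products equal $\rho(C)$, which is exactly $t_{|C|}(y) = t_{|C|-1}(y) = \cdots = t_{|C|-n+1}(y) = \rho(C)$.

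The main obstacle I anticipate is the edge case where the direction $v$ cannot be chosen strictly inside $y^\perp$ with the right sign behavior — but the observation that the spherical geodesic correction $\cos\varepsilon$ \emph{always} decreases positive inner products resolves this: picking $v$ merely orthogonal to all $\Pi_y x$ ($x \in C_y$) is enough, since then those inner products drop by the pure $\cos\varepsilon$ factor and we never need a first-order decrease at all. One should double-check that $\rho(C) > 0$ (so that $\cos\varepsilon$ multiplying it is a genuine strict decrease); this holds because a spherical design with $\tau\ge 1$ spans $\mathbb{R}^n$ and is not contained in any closed hemisphere's complement, forcing $\rho(C) > 0$ — alternatively, for $\rho(C)\le 0$ the statement would need separate (but easy) handling, or one notes $I(y)$ must contain values on both sides to average correctly against linear $f$.
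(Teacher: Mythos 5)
Your proof is correct in substance but follows a genuinely different route from the paper's. The paper disposes of the lemma in two lines of convex geometry: a point $y$ realizing the covering radius is the center of an empty spherical cap whose bounding hyperplane $\{z:\langle z,y\rangle=\rho(C)\}$ supports $\mathrm{conv}(C)$ along a facet (this implicitly uses that the origin is interior to $\mathrm{conv}(C)$, i.e.\ $\rho(C)>0$), and every facet of a full-dimensional polytope has at least $n$ vertices, all of which lie in $C_y$. You instead run a first- and second-order variational argument along spherical geodesics: first-order optimality of $y$ for $g(z)=\max_{x\in C}\langle x,z\rangle$ forces $0\in\mathrm{conv}\{\Pi_y x: x\in C_y\}$, and if $|C_y|\le n-1$ you produce a tangent direction $v$ annihilating all of $C_y$, so that the $\cos\varepsilon$ factor alone strictly decreases $g$ --- a contradiction. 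Both arguments hinge on $\rho(C)>0$, which you correctly justify from the $1$-design property (zero centroid plus the spanning assumption). One step you should tighten: you justify the existence of $v$ by saying the vectors $\Pi_y x$, $x\in C_y$, ``span a space of dimension at most $n-2$,'' but $n-1$ vectors in the $(n-1)$-dimensional space $y^{\perp}$ can perfectly well span all of it; the correct reason is the one you state just before, namely that $0\in\mathrm{conv}\{\Pi_y x\}$ forces a nontrivial \emph{linear} dependence (the convex-combination coefficients sum to $1$), so the linear span coincides with the affine hull and has dimension at most $|C_y|-1\le n-2$. With that sentence made explicit the proof is complete; it is longer and more analytic than the paper's, but it is self-contained and avoids appealing to facts about faces of polytopes.
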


\begin{proof} The point $y$ is the center of a spherical cap
which corresponds to one of the facets ($(n-1)$-dimensional faces) of the
convex hull of $C$. Since every facet has at least $n$ vertices,
the statement follows.
\end{proof}

\subsection{A signed measure and corresponding orthogonal polynomials}
\label{sec:1.3}

As mentioned above, by $P_i^{(n)}(t)$ we denote the Gegenbauer polynomials \cite{Sze} of degrees $i=0,1,\ldots$, normalized by $P_i^{(n)}(1)=1$.
The measure of orthogonality of Gegenbauer polynomials is
\[ d\mu(t) := c_n (1-t^2)^{\frac{n-3}{2}}\, dt, \ \ \ t \in [-1,1], \ \ \
 c_n := \Gamma(\frac{n}{2})/\sqrt{\pi}\Gamma(\frac{n-1}{2}). \]

Recall that a signed Borel measure $\nu$ on $\mathbb{R}$ for which all polynomials are integrable is called
{\it positive definite up to degree $m$} if $\int p^2(t) d \nu(t) > 0$
for all real nonzero polynomials $p(t)$ of degree at most $m$.

It was proved in \cite[Lemma 2.2]{BDHSS-DCC2019} that the signed measure
\[ d \mu_\ell(t) := c_{n,\ell} (t-\ell) d \mu(t), \ \ \ t \in [-1,1], \ \ \
 c_{n,\ell} := - 1/\ell. \]
is positive definite up to degree $k-1$ provided that $\ell < t_{k,1}$, where $t_{k,1}$ is the smallest zero of the Gegenbauer
polynomial $P_k^{(n)}(t)$.
This implies (see Corollary 2.3 in \cite{BDHSS-DCC2019}) the existence of a finite sequence of polynomials
$(P_i^{0,\ell}(t))_{i=0}^k$ which are orthogonal with respect to $d \mu_\ell(t)$.
Moreover, with the normalization $P_i^{0,\ell}(1)=1$ these polynomials are uniquely determined by the
Gram-Schmidt orthogonalization. This allows us to write (see \eqref{Ch-D} below) explicitly
$P_i^{0,\ell}(t)$ via the Christoffel-Darboux kernel
\[ T_i(u,v) := \sum_{j=0}^i r_j P_j^{(n)}(u) P_j^{(n)}(v). \]

The boundary case $\ell=-1$ leads to polynomials which Levenshtein \cite{Lev2} denoted by $P_i^{0,1}(t)$ and
which are in fact (normalized) Jacobi polynomials with parameters $(\alpha,\beta) = ((n-3)/2,(n-1)/2)$.

The parameter $k$ henceforth comes from $\tau=2k$, the strength of designs under consideration.

\section{Properties of the polynomials $P_i^{0,\ell} (t)$}
\label{sec:2}

\subsection{Interlacing of roots}
\label{sec:2.1}

The representation (verified in the next theorem)
\begin{equation} \label{Ch-D}
P_i^{0,\ell} (t) = \frac{T_i (t,\ell)}{T_i(1,\ell)} = \frac{(1-\ell)\left( P_{i+1}^{(n)} (t) - P_i^{(n)} (t)P_{i+1}^{(n)} (\ell)/
P_{i}^{(n)} (\ell) \right)}{(t-\ell)\left(1-P_{i+1}^{(n)} (\ell)/P_{i}^{(n)} (\ell)\right)}
\end{equation}
via the  Christoffel-Darboux formula allows us to derive interlacing properties of the zeros of $P_i^{0,\ell}(t)$ with respect to the
zeros of $P_j^{(n)}(t)$, $j=i,i+1$.

We denote by
\[ t_{i,1} < t_{i,2} < \cdots < t_{i,i} \]
the zeros of $P_i^{(n)}(t)$. For the zeros of $P_i^{0,\ell}(t)$ we use the same idea of notation with just adding upper indices $0,\ell$.

\begin{theorem}
\label{roots-of-p0-ell}
Let $\ell$ and $k$ be such that $t_{k+1,1} < \ell < t_{k,1}$ and
$P_{k+1}^{(n)}(\ell)/P_{k}^{(n)}(\ell)<1$. Then
\begin{equation} \label{Pi0-ell}
P_i^{0,\ell}(t) = \frac{T_i (t,\ell)}{T_i (1,\ell)} = m_i^{0,\ell} t^i + \cdots,\quad i=0,1,\dots, k,
\end{equation}
with $m_i^{0,\ell}>0$ and all zeros $\{t_{i,j}^{0,\ell}\}_{j=1}^i$ of $P_i^{0,\ell}(t)$
are in the interval $[\ell,1]$. Moreover, the interlacing rules
\begin{equation}\label{Interlacing}\begin{split}
t_{i,j}^{0,\ell} &\in (t_{i,j}, t_{i+1,j+1}), \ i=1,\dots, k-1, j=1,\dots, i ;\\
t_{k,j}^{0,\ell} &\in (t_{k+1,j+1}, t_{k,j+1}), \ j=1,\dots,k-1, \ t_{k,k}^{0,\ell}\in (t_{k+1,k+1},1),
\end{split}
\end{equation}
hold.
\end{theorem}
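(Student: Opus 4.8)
The plan is to establish the three assertions in order---the leading-coefficient sign, the localization of zeros in $[\ell,1]$, and the interlacing rules---using the Christoffel--Darboux representation \eqref{Ch-D} as the central tool. First I would verify the second equality in \eqref{Ch-D}: starting from $T_i(t,\ell)=\sum_{j=0}^i r_j P_j^{(n)}(t)P_j^{(n)}(\ell)$ and applying the Christoffel--Darboux formula for the Gegenbauer polynomials, one gets $T_i(t,\ell)=\mathrm{const}\cdot\bigl(P_{i+1}^{(n)}(t)P_i^{(n)}(\ell)-P_i^{(n)}(t)P_{i+1}^{(n)}(\ell)\bigr)/(t-\ell)$; dividing by $T_i(1,\ell)$ and simplifying yields the stated closed form. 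That $P_i^{0,\ell}(t)$ so defined is a polynomial of degree exactly $i$ follows because the bracketed numerator vanishes at $t=\ell$ (so $t-\ell$ divides it) and the coefficient $1-P_{i+1}^{(n)}(\ell)/P_i^{(n)}(\ell)$ in the denominator is nonzero by hypothesis.

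For the sign $m_i^{0,\ell}>0$ I would look at the top-degree term: the numerator $P_{i+1}^{(n)}(t)-P_i^{(n)}(t)P_{i+1}^{(n)}(\ell)/P_i^{(n)}(\ell)$ has degree $i+1$ with positive leading coefficient (Gegenbauer polynomials normalized at $1$ have positive leading coefficients), so after division by $(t-\ell)$ the degree-$i$ coefficient $m_i^{0,\ell}$ equals that positive leading coefficient times $(1-\ell)/\bigl(1-P_{i+1}^{(n)}(\ell)/P_i^{(n)}(\ell)\bigr)$; since $\ell<1$ and, by the hypothesis $P_{k+1}^{(n)}(\ell)/P_k^{(n)}(\ell)<1$ together with the interlacing-type behaviour of the ratios $P_{i+1}^{(n)}(\ell)/P_i^{(n)}(\ell)$ on the interval $(t_{k+1,1},t_{k,1})$, the denominator is positive for every $i\le k$, giving $m_i^{0,\ell}>0$. (I would isolate as a small sublemma the monotonicity/sign statement for these ratios on $(t_{k+1,1},t_{k,1})$, which is really the technical heart of controlling all $i$ at once rather than just $i=k$.)

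For the location of the zeros and the interlacing I would use two facts: the real zeros of $P_i^{0,\ell}$, being the zeros of $T_i(t,\ell)$ in $t$, are well understood via the reproducing-kernel structure---$P_i^{0,\ell}$ is (up to normalization) the $i$-th orthogonal polynomial for the positive-definite-up-to-degree-$k-1$ signed measure $d\mu_\ell$, so its $i$ zeros are real, simple, and lie in the support $[\ell,1]$ of $d\mu_\ell$ for $i\le k$. To pin down the interlacing with the zeros of $P_i^{(n)}$ and $P_{i+1}^{(n)}$, I would evaluate the numerator of \eqref{Ch-D} at consecutive zeros of $P_i^{(n)}$ (where it reduces to $P_{i+1}^{(n)}(t_{i,j})$) and at consecutive zeros of $P_{i+1}^{(n)}$ (where it reduces to $-P_i^{(n)}(t_{i+1,j})P_{i+1}^{(n)}(\ell)/P_i^{(n)}(\ell)$), then use the classical strict interlacing $t_{i+1,j}<t_{i,j}<t_{i+1,j+1}$ for Gegenbauer zeros together with the sign of $P_{i+1}^{(n)}(\ell)/P_i^{(n)}(\ell)$ (positive, since $\ell<t_{k,1}\le t_{i,1}$ puts $\ell$ to the left of all zeros of both polynomials) to track sign changes of $P_i^{0,\ell}$ across these prescribed subintervals. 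A counting argument---$i$ sign changes forced among $i$ disjoint open intervals, matching the degree---then shows exactly one zero in each, yielding \eqref{Interlacing}; the boundary interval for $j$ near $k$ needs the separate observation that $t_{k+1,k+1}<t_{k,k}<1$ and that $P_k^{0,\ell}(1)=1>0$ with $P_k^{0,\ell}$ changing sign at $t_{k+1,k+1}$.

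The main obstacle I anticipate is not the interlacing bookkeeping but establishing that the sign condition $1-P_{i+1}^{(n)}(\ell)/P_i^{(n)}(\ell)>0$ propagates from $i=k$ (the hypothesis) down to all $i<k$, equivalently that $\ell$ lies to the left of the relevant zero of the combination $P_{i+1}^{(n)}-P_i^{(n)}$ for every such $i$; handling this uniformly, most cleanly via the three-term recurrence for the ratios $u_i(\ell):=P_{i+1}^{(n)}(\ell)/P_i^{(n)}(\ell)$ and their monotone dependence on $\ell$ in $(t_{k+1,1},t_{k,1})$, is where the real care is required, and I would want to state it as a standalone lemma before invoking \eqref{Ch-D}.
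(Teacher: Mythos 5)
Your overall route---the Christoffel--Darboux closed form followed by an analysis of where the ratio $P_{i+1}^{(n)}(t)/P_i^{(n)}(t)$ takes the value $c_i:=P_{i+1}^{(n)}(\ell)/P_i^{(n)}(\ell)$---is the same as the paper's, but two steps as written would fail. First, the sign of $c_i$ for $i<k$ is the opposite of what you claim, and for exactly the reason you cite: since $\ell<t_{k,1}\le t_{i+1,1}<t_{i,1}$, the point $\ell$ lies to the left of every zero of both $P_i^{(n)}$ and $P_{i+1}^{(n)}$, so (both having positive leading coefficients) ${\rm sign}\,P_i^{(n)}(\ell)=(-1)^i$ and ${\rm sign}\,P_{i+1}^{(n)}(\ell)=(-1)^{i+1}$, whence $c_i<0$, not $c_i>0$. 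This is not cosmetic: with a positive constant your sign-change count locates the solutions of $P_{i+1}^{(n)}(t)/P_i^{(n)}(t)=c_i$ in the intervals $(t_{i+1,j},t_{i,j})$, to the left of the zeros of $P_i^{(n)}$, whereas the theorem asserts $t_{i,j}^{0,\ell}\in(t_{i,j},t_{i+1,j+1})$. Incidentally, the difficulty you single out as ``the real care required''---propagating $1-c_i>0$ from $i=k$ down to all $i<k$ via a recurrence or monotonicity lemma---evaporates once the sign is right: $c_i<0$ gives $1-c_i>1$ trivially for every $i<k$, and only $i=k$ needs the hypothesis.

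Second, you cannot obtain the localization of the zeros of $P_k^{0,\ell}$ in $[\ell,1]$ from general orthogonal-polynomial theory, because $d\mu_\ell$ is positive definite only up to degree $k-1$; the standard ``real, simple, inside the support'' argument covers $i\le k-1$ only. Indeed, the paper's remark immediately after the theorem observes that the largest root of $P_k^{0,\ell}$ \emph{does} leave $[-1,1]$ when the hypothesis $P_{k+1}^{(n)}(\ell)/P_k^{(n)}(\ell)<1$ fails, so the claim is false without that hypothesis---and your argument never actually invokes the hypothesis where it is needed. The correct treatment of $i=k$ (the paper's) is: $t_{k+1,1}<\ell$ forces $c_k>0$; the first $k-1$ solutions are accounted for in $(t_{k+1,j+1},t_{k,j+1})$; and for the last one, $P_{k+1}^{(n)}(t)/P_k^{(n)}(t)$ increases from $0$ to $+\infty$ on $(t_{k+1,k+1},\infty)$ and equals $1$ at $t=1$, so $0<c_k<1$ pins $t_{k,k}^{0,\ell}\in(t_{k+1,k+1},1)$. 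Your closing observation that ``$P_k^{0,\ell}$ changes sign at $t_{k+1,k+1}$'' is also incorrect as stated; it is $P_{k+1}^{(n)}$ that vanishes there, not $P_k^{0,\ell}$.
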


\begin{proof}
It follows from the Christoffel-Darboux formula in \eqref{Ch-D} that $(t-\ell)T_i(t,\ell)$ is
a linear combination of the polynomials $P_{i+1}^{(n)}(t)$ and  $P_i^{(n)}(t)$. This immediately implies that
the polynomial
$T_i(t,\ell)$ is orthogonal to any polynomial of degree at most $i-1$ with respect to the measure $d\mu_{\ell}(t)$. Now
\eqref{Pi0-ell} follows from the positive definiteness of $d\mu_{\ell}(t)$ up to degree $k-1$,
the uniqueness of the Gram-Schmidt orthogonalization process and the normalization. The comparison of coefficients
in \eqref{Pi0-ell} shows that $m_i^{0,\ell}>0$, $i=0,1,\ldots,k$.

We proceed with the proof of \eqref{Interlacing}, the location of the zeros of $P_i^{0,\ell}(t)$. It follows from \eqref{Pi0-ell} and \eqref{Ch-D} that they are solutions of the equation
\begin{equation} \label{FracEq}
\frac{P_{i+1}^{(n)}(t)}{P_i^{(n)}(t)} = \frac{P_{i+1}^{(n)}(\ell)}{P_i^{(n)}(\ell)}.
\end{equation}

Let $i<k$. Then the zeros of $P_{i+1}^{(n)}(t)$ and $P_i^{(n)}(t)$ are interlaced and contained in $[t_{k,1},t_{k,k}]$. Since
${\rm sign}\,P_i^{(n)}(\ell)=(-1)^i$, the right hand side of \eqref{FracEq} is a negative constant. The left hand side $P_{i+1}^{(n)}(t)/P_i^{(n)}(t)$
has simple poles at $t_{i, j}$, $j=1,\dots, i$, and simple zeros at $t_{i+1,j}$, $j=1,\dots, i+1$. Therefore, there is at least one solution $t_{i,j}^{0,\ell}$ of \eqref{FracEq} on every subinterval $(t_{i,j}, t_{i+1,j+1})$, $j=1,\dots,i$, which accounts for all zeros of $P_i^{0,\ell}(t)$.

If $i=k$, then it follows from $t_{k+1,1}<\ell<t_{k,1}$ that $P_{k+1}^{(n)}(\ell)/P_k^{(n)}(\ell)>0$. Moreover, we can account as above
for the first $k-1$ solutions of \eqref{FracEq}, namely \[t_{k,j}^{0,\ell} \in (t_{k+1,j+1}, t_{k,j+1}), \ j=1,\dots,k-1.\]
For the last zero of $P_{k}^{0,\ell}(t)$ we utilize the fact that $P_{k+1}^{(n)}(t)/P_k^{(n)}(t)>0$ for $t\in (t_{k+1,k+1},\infty)$. As $\lim_{t\to \infty}P_{k+1}^{(n)}(t)/P_k^{(n)}(t) = \infty$, we have one more solution $t_{k,k}^{0,\ell}>t_{k+1,k+1}$ of \eqref{FracEq}.
Finally, since $P_{k+1}^{(n)}(\ell)/P_{k}^{(n)}(\ell)<1=P_{k+1}^{(n)}(1)/P_k^{(n)}(1)$ by assumption, we conclude that $t_{k,k}^{0,\ell}<1$.
\end{proof}

\begin{remark}
In general, the polynomial $P_k^{0,\ell} (t)$ is well defined for all $t_{k+1,1}<\ell<t_{k,1}$, but its largest
root leaves the interval $[-1,1]$ and the leading coefficient becomes negative when (as the proof above shows) the condition
$P_{k+1}^{(n)}(\ell)/P_{k}^{(n)}(\ell)<1$ is not satisfied.
\end{remark}

We next describe the three-term recurrence relation for the polynomials $P_i^{0,\ell}(t)$. The positive definiteness of the measure $d\mu_{\ell}(t)$ implies that
\[r_i^{0,\ell} := \left( \int_{-1}^1 \left( P_i^{0,\ell} (t)\right)^2 \,d\mu_{\ell} (t) \right)^{-1} >0, \  \ i=0,1,\dots,k-1. \]
Then the three-term recurrence relation for $P_i^{0,\ell}(t)$ can be written as
\[ (t-a_i^{0,\ell}) P_i^{0,\ell} (t) = b_i^{0,\ell} P_{i+1}^{0,\ell} (t) + c_i^{0,\ell} P_{i-1}^{0,\ell} (t), \]
$i=1,2, \dots ,k-1$, where the coefficients are given by
\[ b_i^{0,\ell} = \frac{m_{i+1}^{0,\ell}}{m_i^{0,\ell}}>0, \ c_i^{0,\ell} = \frac{r_{i-1}^{0,\ell} b_{i-1}^{0,\ell}}{r_i^{0,\ell}}>0, \
a_i^{0,\ell} = 1 - b_i^{0,\ell} - c_i^{0,\ell}. \]
The initial conditions are $P_0^{0,\ell}(t)=1$ and $P_1^{0,\ell}(t) = (n\ell t + 1)/(n\ell + 1)$.
We also note that the orthogonality implies that the zeros of the polynomials $P_j^{0,\ell} (t)$ interlace; i.e.,
\[ t_{j,i}^{0,\ell} < t_{j-1,i}^{0,\ell} < t_{j,i+1}^{0,\ell}, \ i=1,2,\dots, j-1. \]

\subsection{A quadrature formula}
\label{sec:2.2}

The next theorem involves the zeros of $P_{k}^{0,\ell}(t)$ in a positive quadrature formula of Gauss-Jacobi type.
We denote by $L_i (t)$, $i=0,1,\ldots,k$, the Lagrange basic polynomials generated by the
nodes $\ell<t_{k,1}^{0,\ell}<t_{k,2}^{0,\ell}<\cdots<t_{k,k}^{0,\ell}$ and set
\[ \theta_i:=\int_{-1}^1 L_i(t) d\mu(t), \ \ i=0,1,\dots,k. \]

\begin{theorem} \label{QFtheorem} Let $t_{k,1}^{0,\ell}<t_{k,2}^{0,\ell}<\cdots<t_{k,k}^{0,\ell}$
 be the zeros of the polynomial $P_{k}^{0,\ell}(t)$. Then the quadrature formula
\begin{equation}
\label{QF}
f_0 = \int_{-1}^1 f(t) d \mu(t)
    =\theta_0 f(\ell)+ \sum_{i=1}^{k} \theta_i f(t_{k,i}^{0,\ell})
\end{equation}
is exact for all polynomials of degree at most $2k$ and has positive weights $\theta_i>0$, $i=0,1,\dots, k$.
\end{theorem}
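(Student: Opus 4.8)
The plan is to construct the quadrature formula \eqref{QF} by the classical Gauss--Jacobi argument adapted to the signed measure $d\mu_\ell$, and then to transfer positivity of the weights back to $d\mu$. First I would define the $k+1$ nodes to be $\ell$ together with the zeros $t_{k,1}^{0,\ell},\dots,t_{k,k}^{0,\ell}$ of $P_k^{0,\ell}(t)$; by Theorem \ref{roots-of-p0-ell} these are $k+1$ distinct points in $[\ell,1]\subset[-1,1]$, so the Lagrange interpolation at these nodes is well defined and the $\theta_i$ make sense. For any polynomial $f$ of degree at most $2k$, write $f(t) = q(t)\,(t-\ell)P_k^{0,\ell}(t) + s(t)$ with $\deg q \le k-1$ and $\deg s \le k$; since the $k+1$ nodes are exactly the zeros of $(t-\ell)P_k^{0,\ell}(t)$, the interpolation polynomial of $f$ at these nodes is $s(t)$, hence $\sum_{i} [\text{node weights}]\, f(\text{node}) = \int_{-1}^1 s(t)\,d\mu(t)$ where the ``node weights'' are precisely the $\theta_i$. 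It then remains to show $\int_{-1}^1 f\,d\mu = \int_{-1}^1 s\,d\mu$, i.e.\ $\int_{-1}^1 q(t)(t-\ell)P_k^{0,\ell}(t)\,d\mu(t) = 0$. Recalling $d\mu_\ell(t) = c_{n,\ell}(t-\ell)\,d\mu(t)$, this integral is (up to the positive constant $1/c_{n,\ell} = -\ell > 0$, valid since $\ell<0$) equal to $\int_{-1}^1 q(t)P_k^{0,\ell}(t)\,d\mu_\ell(t)$, which vanishes because $P_k^{0,\ell}$ is orthogonal to all polynomials of degree $\le k-1$ with respect to $d\mu_\ell$ — exactly the defining property established before Theorem \ref{roots-of-p0-ell}. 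This gives exactness on $\deg f \le 2k$.

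For positivity of the weights, the standard trick is to test the formula on suitable squares. For $\theta_j$ with $j \in \{1,\dots,k\}$, apply \eqref{QF} to $f(t) = (t-\ell)\,L_j(t)^2$, which has degree $2k+1$ — so I must instead use $f(t) = (t-\ell)\bigl(P_k^{0,\ell}(t)/(t-t_{k,j}^{0,\ell})\bigr)^2$, of degree $2k-1 < 2k$, which vanishes at $\ell$ and at every node except $t_{k,j}^{0,\ell}$. The right-hand side collapses to $\theta_j \cdot (t_{k,j}^{0,\ell}-\ell)\bigl(P_k^{0,\ell}{}'(t_{k,j}^{0,\ell})\bigr)^2$, and the left-hand side equals $(1/c_{n,\ell})\int_{-1}^1 \bigl(P_k^{0,\ell}(t)/(t-t_{k,j}^{0,\ell})\bigr)^2 d\mu_\ell(t) > 0$ by positive definiteness of $d\mu_\ell$ up to degree $k-1$ (the integrand is a square of a degree-$(k-1)$ polynomial). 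Since $t_{k,j}^{0,\ell} > \ell$, we conclude $\theta_j > 0$. For $\theta_0$, test on $f(t) = P_k^{0,\ell}(t)^2$, of degree $2k$: the right-hand side is $\theta_0\,P_k^{0,\ell}(\ell)^2$ since $P_k^{0,\ell}$ vanishes at all the other nodes, and the left-hand side is $\int_{-1}^1 P_k^{0,\ell}(t)^2 d\mu(t) > 0$ because $d\mu$ is a genuine (positive) measure; as $P_k^{0,\ell}(\ell)\ne 0$ (indeed $\ell$ is not among its zeros, all of which lie in $(\ell,1]$), we get $\theta_0 > 0$.

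The only delicate points are bookkeeping ones: checking degrees stay $\le 2k$ when choosing the test functions (hence the division by $(t-t_{k,j}^{0,\ell})$ rather than using $L_j^2$ directly), confirming $P_k^{0,\ell}(\ell)\ne 0$, and confirming the sign $c_{n,\ell} = -1/\ell > 0$, which holds because $\ell<t_{k,1}\le 0$ (the smallest zero of a Gegenbauer polynomial is negative). I expect the main obstacle, if any, is simply making the ``$\theta_j$ is the Lagrange weight for $(t-\ell)P_k^{0,\ell}$-interpolation'' identification clean: one wants $\theta_i = \int L_i\,d\mu$ to coincide with the coefficient extracted in the decomposition $f = q\cdot(t-\ell)P_k^{0,\ell} + s$, and this follows because $s = \sum_i f(\text{node}_i) L_i$ by uniqueness of interpolation at $k+1$ nodes for a degree-$\le k$ polynomial. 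Everything else is the textbook Gauss--Jacobi mechanism, merely carried out with the signed weight $d\mu_\ell$ in place of a positive one — legitimate precisely because $d\mu_\ell$ is positive definite up to the degree we need.
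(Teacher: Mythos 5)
Your proposal is correct and follows essentially the same route as the paper's proof: the division $f=q\cdot(t-\ell)P_k^{0,\ell}+s$ combined with the orthogonality of $P_k^{0,\ell}$ to lower-degree polynomials with respect to $d\mu_\ell$ for exactness, and the test functions $(t-\ell)\bigl(P_k^{0,\ell}(t)/(t-t_{k,j}^{0,\ell})\bigr)^2$ and $\bigl(P_k^{0,\ell}(t)\bigr)^2$ for positivity of the weights. Your extra checks (that $c_{n,\ell}>0$ because $\ell<t_{k,1}\le 0$, and that $P_k^{0,\ell}(\ell)\ne 0$) are points the paper leaves implicit, but they are correct and the argument is the same.
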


\begin{proof}
We first observe that \eqref{QF} is exact for the Lagrange basis at $k+1$ nodes
as defined above and hence for all polynomials of degree at most $k$. Indeed, any such polynomial can be written
as
\[ f(t)=f(\ell)L_0(t)+\sum_{i=1}^k f(t_{k,i}^{0,\ell}) L_i(t) \]
and integration over $[-1,1]$ with respect to $\mu(t)$ gives the result.

Given a polynomial $f(t)$ of degree at most $2k$, we divide it by $(t-\ell)P_k^{0,\ell}(t)$ and write
\begin{equation} \label{div}
f(t)=(t-\ell)P_{k}^{0,\ell} (t)q(t) + r(t),
\end{equation}
where $q(t)$ has degree at most $k-1$ and $r(t)$ has degree at most $k$.
We again integrate over $[-1,1]$ with respect to $\mu(t)$ and use the orthogonality of $P_{k}^{0,\ell}(t)$ to all polynomials of degree at
most $k-1$ with respect to $d\mu_{\ell}(t)$ to see that
\[ f_0=r_0=\theta_0 r(\ell)+ \sum_{i=1}^{k} \theta_i r(t_{k,i}^{0,\ell})=\theta_0 f(\ell)+ \sum_{i=1}^{k} \theta_i f(t_{k,i}^{0,\ell}) \]
by \eqref{div}. Therefore \eqref{QF} holds true for for all polynomials of degree at most $2k$.

We now show the positivity of the weights $\theta_i$, $i=0,\dots, k$.
First we fix $i \in \{1,2,\ldots,k\}$ and substitute $f(t)=(t-\ell)\left( u_{i} (t)\right)^2$ in \eqref{QF}, where
\[ u_{i}(t)=\frac{P_{k}^{0,\ell}(t)}{t-t_{k,i}^{0,\ell}}, \ \ \deg(u_i)=k-1, \]
(i.e., $f(t)$ is of degree $2k-1$ and is eligible for exactness in \eqref{QF}) to obtain
\[ f_0 = \theta_i (t_{k,i}-\ell) \left(u_{i} (t_{k,i}^{0,\ell})\right)^2. \]
On the other hand, we have
\[ f_0=\int_{-1}^1 (t-\ell)\left( u_{i} (t)\right)^2 d \mu(t) = \frac{1}{c_{n,\ell}} \int \left( u_{i} (t)\right)^2 d \mu_\ell(t) > 0 \]
by the positive definiteness up to degree $k-1$ of the measure $\mu_\ell(t)$. Finally, we
use in \eqref{QF} the polynomial $f(t)=\left( P_k^{0,\ell}(t)\right)^2$ of degree $2k$ to obtain
\begin{eqnarray*}
\theta_0 f(\ell) = f_0 > 0,
\end{eqnarray*}
whence $\theta_0 >0$. This completes the proof.
\end{proof}

\section{Improving the Fazekas-Levenshtein bound for even strengths}
\label{sec:3}

Let $C \subset \mathbb{S}^{n-1}$ be a spherical $2k$-design of cardinality $|C|>D(n,2k)$. We recall that $y \in \mathbb{S}^{n-1}$
is a point which realizes the covering radius of $C$ (see Lemma \ref{last-n}). It is convenient to
use an additional parameter $\ell$ which is close to $-1$ and to consider two cases for $t_1(y)$, namely
$\ell \leq t_1(y)$ and $t_1(y) \in [-1,\ell]$.

\subsection{Case $\ell \leq t_1(y)$}
\label{sec:3.1}

Lower bounds $-1 < \ell \leq t_1(y)$ imply improvements of the Fasekas-Levenshtein bound $t_{FL}$ by
using in \eqref{defin_f} suitable polynomials and applying the quadrature formula \eqref{QF}.

\begin{theorem} \label{impr_fl}
Let $C \subset \mathbb{S}^{n-1}$ be a spherical $2k$-design and $y \in \mathbb{S}^{n-1}$
is a point which realizes the covering radius of $C$.
If $\ell \leq t_1(y)$, then
\[ \rho(C) \geq t_{k,k}^{0,\ell}. \]
\end{theorem}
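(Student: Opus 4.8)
The plan is to use the quadrature formula \eqref{QF} together with the defining property \eqref{defin_f} of a spherical $2k$-design, applied to a carefully chosen test polynomial built from $P_k^{0,\ell}(t)$. Suppose for contradiction that $\rho(C) < t_{k,k}^{0,\ell}$; recall $\rho(C) = t_{|C|}(y)$, so all inner products $t_j(y)$ lie in $[t_1(y),\rho(C)] \subseteq [\ell, t_{k,k}^{0,\ell})$. The key construction is the polynomial
\[
f(t) = (t-\ell)\left(\frac{P_k^{0,\ell}(t)}{t - t_{k,k}^{0,\ell}}\right)^2,
\]
which has degree $2k-1 \le 2k$, hence is eligible both for \eqref{defin_f} and for exactness in \eqref{QF}. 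This is essentially the same test function used to prove positivity of the weights in Theorem~\ref{QFtheorem}.

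First I would evaluate the right-hand side of \eqref{defin_f}. Since $f$ has degree at most $2k$, apply the quadrature formula \eqref{QF}: the nodes are $\ell$ and $t_{k,1}^{0,\ell},\dots,t_{k,k}^{0,\ell}$. At $t=\ell$ the factor $(t-\ell)$ kills the contribution; at each node $t_{k,i}^{0,\ell}$ with $i<k$ the factor $P_k^{0,\ell}(t_{k,i}^{0,\ell}) = 0$ kills the contribution; only $t = t_{k,k}^{0,\ell}$ survives, giving
\[
f_0 = \theta_k\,(t_{k,k}^{0,\ell} - \ell)\left(\frac{P_k^{0,\ell}(t)}{t - t_{k,k}^{0,\ell}}\Big|_{t=t_{k,k}^{0,\ell}}\right)^2 > 0,
\]
which is strictly positive by Theorem~\ref{QFtheorem} ($\theta_k>0$, $t_{k,k}^{0,\ell}>\ell$). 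Next I would evaluate the left-hand side of \eqref{defin_f}, namely $\sum_{x\in C} f(\langle x,y\rangle) = \sum_{j=1}^{|C|} f(t_j(y)) = f_0|C| > 0$. The goal is to derive a contradiction by showing each term $f(t_j(y))$ is $\le 0$ under the assumption $\rho(C) < t_{k,k}^{0,\ell}$.

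For this I need the sign of $f$ on $[\ell, t_{k,k}^{0,\ell})$. The factor $(t-\ell)$ is $\ge 0$ there and the squared factor is $\ge 0$, so naively $f \ge 0$, which does not immediately contradict anything. The fix is to use a \emph{different} test polynomial that is nonpositive on $[\ell, t_{k,k}^{0,\ell})$; the natural candidate is
\[
g(t) = -(t - t_{k,k}^{0,\ell})\left(\frac{P_k^{0,\ell}(t)}{\prod_{i=1}^{k-1}(t - t_{k,i}^{0,\ell})}\right)\cdot(\text{correcting linear factors}),
\]
but the cleanest choice is actually $g(t) = (\ell - t)\,h(t)^2$ with $h$ chosen so that $g$ still has the quadrature picking out only the node $t_{k,k}^{0,\ell}$; since $(\ell-t)\le 0$ and $h^2\ge 0$ on $[\ell,1]$, every term on the left is $\le 0$, while the quadrature gives $g_0 = \theta_k(\ell - t_{k,k}^{0,\ell})h(t_{k,k}^{0,\ell})^2 < 0$ — contradicting $g_0|C| = \sum_j g(t_j(y)) \le 0$ only if it is \emph{strictly} negative, which forces at least one $t_j(y) = t_{k,k}^{0,\ell}$, i.e.\ $\rho(C)\ge t_{k,k}^{0,\ell}$ after noting $t_{|C|}(y)=\rho(C)$ is the largest. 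I expect the main obstacle to be pinning down the exact form of $h$ (equivalently, the right sign conventions and the correct linear adjustments so that the polynomial has degree exactly $2k$, vanishes at all quadrature nodes except $t_{k,k}^{0,\ell}$, and is globally $\le 0$ on $[\ell,1]$) and verifying that the inequality is strict enough to conclude; the design condition $\ell \le t_1(y)$ enters precisely to guarantee every $t_j(y)$ lies in $[\ell,1]$ where the sign analysis is valid.
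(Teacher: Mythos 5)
You have the right machinery in hand (the design identity \eqref{defin_f}, the quadrature formula \eqref{QF}, and the observation that the factor $(t-\ell)$ together with the zeros of $P_k^{0,\ell}$ kills every quadrature node except $t_{k,k}^{0,\ell}$), but neither of your candidate polynomials closes the argument, and the obstacle you name at the end is exactly the gap. Your first candidate $(t-\ell)\bigl(P_k^{0,\ell}(t)/(t-t_{k,k}^{0,\ell})\bigr)^2$ is nonnegative on $[\ell,1]$, so both sides of \eqref{defin_f} are nonnegative and nothing follows; you see this. Your second candidate $g(t)=(\ell-t)h(t)^2$ is nonpositive on all of $[\ell,1]$, so the design side gives $\sum_j g(t_j(y))\le 0$ while the quadrature gives $g_0<0$ --- these are perfectly consistent whether or not $\rho(C)\ge t_{k,k}^{0,\ell}$, so there is no contradiction and no node is ``forced'' to be hit. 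A polynomial of constant sign on the whole interval containing $I(y)$ can never detect where $\rho(C)$ sits.

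The missing idea is to build the sign change at $\rho(C)$ itself into the test polynomial. The paper uses
\[
f(t) = (t-\ell)\,(t-\rho(C))\left(\frac{P_k^{0,\ell}(t)}{t-t_{k,k}^{0,\ell}}\right)^2,
\]
of degree $2k$. The product $(t-\ell)(t-\rho(C))$ is $\le 0$ precisely on $[\ell,\rho(C)]$, which by hypothesis contains every $t_j(y)$, so the design side gives $f_0|C|=\sum_j f(t_j(y))\le 0$ directly (no contradiction argument needed). The quadrature side still collapses to the single node $t_{k,k}^{0,\ell}$, yielding $f_0|C|=|C|\,\theta_k f(t_{k,k}^{0,\ell})$, hence $f(t_{k,k}^{0,\ell})\le 0$. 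Since $t_{k,k}^{0,\ell}-\ell>0$, $\theta_k>0$, and the squared factor is positive at a simple zero, this forces $t_{k,k}^{0,\ell}-\rho(C)\le 0$, i.e.\ $\rho(C)\ge t_{k,k}^{0,\ell}$. So the extra linear factor $(t-\rho(C))$ --- depending on the very quantity being bounded --- is what reconciles the two sign requirements you were struggling to satisfy simultaneously.
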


\begin{proof}
We apply \eqref{defin_f} for $y$, $C$, and
\[ f(t) = (t-\ell)(t-\rho(C))\left(\frac{P_k^{0,\ell}(t)}{t-t_{k,k}^{0,\ell}}\right)^2. \]
Then
\[ f_0|C| = \sum_{j=1}^{|C|} f(t_j(y)) \leq 0 \]
since $t_j(y) \in [\ell,\rho(C)]$ for every $j=1,2,\ldots,|C|$. On the other hand,
\[ f_0|C| = |C|\left(\rho_0 f(\ell) + \sum_{i=1}^k \theta_i f(t_{k,i}^{0,\ell})\right) = |C|\theta_k f(t_{k,k}^{0,\ell}) \]
follows from the quadrature formula \eqref{QF}. Therefore
$f(t_{k,k}^{0,\ell}) \leq 0$, whence $\rho(C) \geq t_{k,k}^{0,\ell}$.
\end{proof}

In the boundary case $\ell=-1$ Theorem \ref{impr_fl} gives the Fasekas-Levenshtein bound $\rho(C) \geq t_k^{0,1}$.
Therefore, we have improvement of the Fasekas-Levenshtein bound whenever it is known (or it is presumed)
that $\ell \leq t_1(y)$ for a point $y$ where the covering radius is realized.
We present in Table \ref{tab:1} values of $t_{k,k}^{0,\ell}$ for different $\ell$, compared to the Fazekas-Levenshtein bound
$t_{FL} = t_k^{0,1}$.

\begin{table}
\begin{center}
\caption{ Some lower bounds $t_{k,k}^{0,\ell}$. }
\label{tab:1}
\begin{tabular}{|c|c|c|c|c|c|}
  \hline\noalign{\smallskip}
  Dimension & Cardinality & Strength & $\ell$ & Fazekas- & New lower \\
  $n$ & $|C|$ & $\tau = 2k$ & & Levenshtein & bound, if \\
  &&&& lower bound & $\ell \leq t_1(y)$  \\
  &&&& $\rho(C) \geq t_k^{0,1}$ & $\rho(C) \geq t_{k,k}^{0,\ell}$  \\
  \noalign{\smallskip}\hline\noalign{\smallskip}
  3 & 10 & 4 & -0.97 & 0.689897 & 0.694892  \\
  \noalign{\smallskip}\hline\noalign{\smallskip}
  3 & 10 & 4 & -0.95 & 0.689897 & 0.698664  \\
  \noalign{\smallskip}\hline\noalign{\smallskip}
  3 & 10 & 4 & -0.9  & 0.689897 & 0.710257  \\
  \noalign{\smallskip}\hline\noalign{\smallskip}
  4 & 15 & 4 & -0.97 & 0.607625 & 0.611772  \\
  \noalign{\smallskip}\hline\noalign{\smallskip}
  4 & 15 & 4 & -0.95 & 0.607625 & 0.614815  \\
  \noalign{\smallskip}\hline\noalign{\smallskip}
  4 & 15 & 4 & -0.9  & 0.607625 & 0.623682  \\
  \noalign{\smallskip}\hline\noalign{\smallskip}
  3 & 17 & 6 & -0.97 & 0.822824 & 0.825859  \\
  \noalign{\smallskip}\hline\noalign{\smallskip}
  3 & 17 & 6 & -0.95 & 0.822824 & 0.828450  \\
  \noalign{\smallskip}\hline\noalign{\smallskip}
  3 & 17 & 6 & -0.9  & 0.822824 & 0.839165  \\
  \noalign{\smallskip}\hline\noalign{\smallskip}
  4 & 31 & 6 & -0.97 & 0.760157 & 0.762785  \\
  \noalign{\smallskip}\hline\noalign{\smallskip}
  4 & 31 & 6 & -0.95 & 0.760157 & 0.764851  \\
  \noalign{\smallskip}\hline\noalign{\smallskip}
  4 & 31 & 6 & -0.9  & 0.760157 & 0.771819  \\
  \noalign{\smallskip}\hline\noalign{\smallskip}
  3 & 26 & 8 & -0.97 & 0.885791 & 0.887931  \\
  \noalign{\smallskip}\hline\noalign{\smallskip}
  3 & 26 & 8 & -0.95 & 0.885791 & 0.890171  \\
  \noalign{\smallskip}\hline\noalign{\smallskip}
  3 & 26 & 8 & -0.9  & 0.885791 & 0.914420  \\
  \noalign{\smallskip}\hline\noalign{\smallskip}
  4 & 56 & 8 & -0.97 & 0.838596 & 0.840453  \\
  \noalign{\smallskip}\hline\noalign{\smallskip}
  4 & 56 & 8 & -0.95 & 0.838596 & 0.842071  \\
  \noalign{\smallskip}\hline\noalign{\smallskip}
  4 & 56 & 8 & -0.9  & 0.838596 & 0.849410  \\
  \hline\noalign{\smallskip}
\end{tabular}
\end{center}
\end{table}

\subsection{Case $t_1(y) \in [-1,\ell]$}
\label{sec:3.2}

This case is more subtle. In applications of \eqref{defin_f} we will have to optimize in two classes of real polynomials. We consider
\[ A(n,k,\ell):=\{ f(t)=A^2(t): \deg(f)=2k, A(t) \mbox{ has $k$ real zeros in } [\ell,t_{FL}] \}, \]
noting that every polynomial from $A(n,k,\ell)$ is decreasing in $[-1,\ell]$, nonnegative in $[\ell,t_{FL}]$,
and increasing in $[t_{FL},1]$ (recall that $t_{FL}=t_k^{0,1}$). Similarly, we use polynomials from the set
\[ B(n,k,s):=\{ g(t)=(t+1)B^2(t)(t-s): \deg(g)=2k, B(t) \mbox{ has $k-1$ real zeros in } [-1,s] \}, \]
where the parameter $s$ (close to $t_{FL}$) will be chosen in advance. We underline that
every polynomial from $B(n,k,\ell)$ is nonpositive in $[-1,s]$ and positive and increasing in $[s,1]$.

The next lemma sets an auxiliary parameter $m(C)$ after optimization in the set $A(n,\tau,\ell)$.

\begin{lemma} \label{lema_m1}
Let $f(t) \in A(n,k,\ell)$ and the positive integer $m$ be such that
\begin{equation} \label{mcond}
f_0|C|<f(\ell)+(m+1)f(t_{FL}).
\end{equation}
Then $t_{|C|-m}(y)<t_{FL}$.
\end{lemma}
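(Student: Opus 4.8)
The plan is to argue by contradiction using the identity \eqref{defin_f} applied to the chosen polynomial $f(t) \in A(n,k,\ell)$. Suppose $t_{|C|-m}(y) \geq t_{FL}$. By the ordering of $I(y)$, this forces the top $m+1$ inner products $t_{|C|-m}(y), t_{|C|-m+1}(y), \dots, t_{|C|}(y)$ to all lie in $[t_{FL}, 1]$ (indeed, in $[t_{FL}, \rho(C)]$). The remaining $|C|-m-1$ inner products $t_1(y), \dots, t_{|C|-m-1}(y)$ lie in $[-1, t_{FL}]$, and in the present case we also know $t_1(y) \in [-1, \ell]$.

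The key step is to obtain a lower bound on $\sum_{x \in C} f(\langle x, y\rangle) = \sum_{j=1}^{|C|} f(t_j(y))$ that contradicts $f_0 |C|$ being smaller. I would use the shape of $f$ recorded before the lemma: since $f = A^2$ with all $k$ zeros of $A$ in $[\ell, t_{FL}]$, the function $f$ is nonnegative everywhere, decreasing on $[-1,\ell]$, and increasing on $[t_{FL}, 1]$. Hence each of the top $m+1$ terms satisfies $f(t_j(y)) \geq f(t_{FL})$ (using that $t_j(y) \geq t_{FL}$ and $f$ is increasing there, with $f \geq 0$), contributing at least $(m+1) f(t_{FL})$. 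For the bottom terms, the smallest one $t_1(y)$ sits in $[-1,\ell]$ where $f$ is decreasing, so $f(t_1(y)) \geq f(\ell)$; all the other $f(t_j(y))$, $2 \leq j \leq |C|-m-1$, are nonnegative. Summing,
\[ f_0 |C| = \sum_{j=1}^{|C|} f(t_j(y)) \geq f(\ell) + (m+1) f(t_{FL}), \]
which directly contradicts the hypothesis \eqref{mcond}. Therefore $t_{|C|-m}(y) < t_{FL}$.

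I do not expect a serious obstacle here; the lemma is essentially a bookkeeping argument on which terms of the design sum can be bounded below by $f(\ell)$ versus by $f(t_{FL})$ versus by $0$. The one point to handle carefully is the boundary behavior at $t_{FL}$ and at $\ell$: if $t_{|C|-m}(y)$ equals $t_{FL}$ exactly, the inequality $f(t_{|C|-m}(y)) \geq f(t_{FL})$ is still valid (with equality), so the conclusion should perhaps be stated as $t_{|C|-m}(y) < t_{FL}$ only because the hypothesis \eqref{mcond} is a strict inequality — a strict gap propagates. A second minor point is making sure we are genuinely in the regime $|C| - m - 1 \geq 1$ so that the term $f(t_1(y)) \geq f(\ell)$ is available and distinct from the top block; since $|C| > D(n,2k)$ and $m$ is a positive integer chosen to satisfy \eqref{mcond}, this will be automatic in the intended applications, but it is worth a sentence. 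Beyond that, the argument is just monotonicity and nonnegativity of $f$ on the three subintervals, combined with the linear-programming identity \eqref{defin_f}.
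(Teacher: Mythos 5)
Your proof is correct and follows essentially the same route as the paper: assume $t_{|C|-m}(y)\geq t_{FL}$, drop the nonnegative middle terms in the sum $\sum_{j} f(t_j(y))=f_0|C|$, and bound the remaining terms below by $f(\ell)$ (monotone decrease on $[-1,\ell]$, using that this lemma sits in the case $t_1(y)\in[-1,\ell]$) and by $(m+1)f(t_{FL})$ (monotone increase on $[t_{FL},1]$), contradicting \eqref{mcond}. The extra remarks on boundary behavior and on $|C|-m-1\geq 1$ are sensible but not needed beyond what the paper already assumes.
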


\begin{proof}
Assume that $t_{|C|-m}(y) \geq t_{FL}$ for a contradiction, and consider \eqref{defin_f} for $C$, $y$, and $f(t)$. Then
\[ f_0|C| = \sum_{i=1}^{|C|} f(t_i(y)) \geq f(t_1(y))+\sum_{i=|C|-m}^{|C|} f(t_i(y)) \geq f(\ell)+(m+1)f(t_{FL}) \]
since $f(t)$ is decreasing in $[-1,\ell]$ (where $t_1(y)$ lies) and increasing in $[t_{FL},1]$ (where the inner products
$t_{|C|-m}(y) \leq \cdots \leq t_{|C|}(y)=\rho(C)$ are located), which contradicts to \eqref{mcond}.
\end{proof}

We define
\[ m(C):=\min \{ m: \exists f \in A(n,k,\ell) \mbox{ such that } f_0|C|<f(\ell)+(m+1)f(t_{FL}) \}. \]
Lemma \ref{last-n} implies that
\[ m(C) \geq n. \]

\begin{example} We have $m(C)=n=3$ for $(n,\tau,|C|)=(3,4,10)$, the first case, where the existence/nonexistence
of spherical 4-designs is undecided. Similarly, for $(n,\tau,|C|)=(4,4,15)$ we have $m(C)=n+1=5$.
More examples of $m(C)$ will be shown in Table \ref{tab:2} in Section \ref{sec:4} together with our new upper and lower bounds for $\rho(C)$.
\end{example}

\begin{lemma} \label{lema_m1-2}
Let $f(t) \in A(n,k,\ell)$ be such that $f_0|C|<f(\ell)+(m(C)+1)f(t_{FL})$. Then $t_{|C|-m(C)}(y) \leq s$,
where $s$ is the largest root of the equation
\[ f_0|C|-f(\ell)=(m(C)+1)f(t). \]
\end{lemma}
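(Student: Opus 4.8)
The plan is to argue by contradiction along the same lines as Lemma~\ref{lema_m1}, but now extracting a sharper conclusion about the location of $t_{|C|-m(C)}(y)$ rather than merely deciding whether it exceeds $t_{FL}$. Suppose, for contradiction, that $t_{|C|-m(C)}(y) > s$, where $s$ is the largest root of the equation $f_0|C|-f(\ell)=(m(C)+1)f(t)$ on the relevant interval. Since $f \in A(n,k,\ell)$ is increasing on $[t_{FL},1]$ and, by the defining property of $m(C)$ together with Lemma~\ref{lema_m1}, we already know $t_{|C|-m(C)}(y) < t_{FL}$ — wait, that is not quite the regime we want; the point is that $s$ is chosen so that $s < t_{FL}$ is not forced, so I should instead locate $s$ carefully: because $f$ is decreasing on $[-1,\ell]$, nonnegative on $[\ell,t_{FL}]$ and increasing on $[t_{FL},1]$, the right-hand side $(m(C)+1)f(t)$ ranges over values that make the equation $f_0|C|-f(\ell)=(m(C)+1)f(t)$ solvable; I take $s$ to be its largest solution in $[\ell,1]$, which lies in $[\ell,t_{FL})$ precisely because the strict inequality $f_0|C| < f(\ell)+(m(C)+1)f(t_{FL})$ forces $f_0|C|-f(\ell) < (m(C)+1)f(t_{FL})$, so the value $f(s)=(f_0|C|-f(\ell))/(m(C)+1)$ is strictly below $f(t_{FL})$.

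Next I would run the counting estimate. Apply \eqref{defin_f} to $C$, $y$, and this $f$. By Lemma~\ref{last-n} the $n$ largest inner products all equal $\rho(C)$, and in any case the $m(C)+1$ inner products $t_{|C|-m(C)}(y) \le \cdots \le t_{|C|}(y)=\rho(C)$ all lie, under our contradiction hypothesis, in $(s,1]$; on this interval — at least on $(s,1] \cap [\ell,1]$, and noting $f$ is increasing past $t_{FL}$ and, since $s<t_{FL}$, bounded below on $(s,t_{FL}]$ by $f(s)$ by monotonicity considerations... actually the cleanest route is: $f$ attains its minimum over $[s,1]$ at $t=s$ provided $s \ge \ell$, because $f$ decreases only on $[-1,\ell]$. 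Hence every one of the top $m(C)+1$ values contributes at least $f(s)$, and $t_1(y) \in [-1,\ell]$ contributes at least $f(\ell)$ by monotonicity of $f$ on that piece. All remaining inner products contribute at least $\min_{[\ell,1]} f \ge 0$ since $f=A^2\ge 0$. Therefore
\[
f_0|C| = \sum_{i=1}^{|C|} f(t_i(y)) \ge f(\ell) + (m(C)+1)f(s) = f(\ell) + \bigl(f_0|C|-f(\ell)\bigr) = f_0|C|,
\]
which is not yet a contradiction — equality is possible. To get a strict contradiction I use that $t_{|C|-m(C)}(y) > s$ strictly, so at least one of the top values, namely $t_{|C|-m(C)}(y)$, satisfies $f(t_{|C|-m(C)}(y)) > f(s)$: on $[s,t_{FL}]$, $f$ could be flat only if $A$ were constant there, but $A$ has $k$ real zeros in $[\ell,t_{FL}]$, so $f=A^2$ is not constant on any subinterval of positive length, giving strict monotonicity-type separation, hence $f(t_{|C|-m(C)}(y)) > f(s)$. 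Then the displayed chain becomes $f_0|C| > f_0|C|$, a contradiction. (If $f$ happened to vanish on part of $[\ell,t_{FL}]$ one argues with the nearest zero of $A$; the essential point is that $s$ is the \emph{largest} root, so to the right of $s$ one cannot have $f \le f(s)$ throughout.)

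I would then conclude that $t_{|C|-m(C)}(y) \le s$, as claimed. The main obstacle I anticipate is the boundary/degeneracy bookkeeping: ensuring that $s \in [\ell,t_{FL})$ so that $f$ is genuinely nondecreasing on $[s,1]$ and the bound $f(t_i(y)) \ge f(s)$ holds for all the top $m(C)+1$ inner products, and handling the possibility that $f$ vanishes at interior points of $[\ell,t_{FL}]$ (which can make $f(s)=0$ and flatten the inequality). Both are resolved by invoking the structural description of polynomials in $A(n,k,\ell)$ — decreasing on $[-1,\ell]$, $\ge 0$ on $[\ell,t_{FL}]$, increasing on $[t_{FL},1]$ — together with the strictness built into the hypothesis $f_0|C| < f(\ell)+(m(C)+1)f(t_{FL})$, which keeps $s$ strictly to the left of $t_{FL}$, and the fact that a nonzero square of a polynomial with $k$ distinct real roots cannot be locally constant. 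The rest is the same averaging argument used in Lemmas~\ref{lema_m1} and~\ref{lema_m1}, now read as a sharpening rather than a dichotomy.
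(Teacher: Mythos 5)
Your proof is correct and is essentially the paper's own argument: the paper runs the identical counting estimate directly (obtaining $f_0|C|\ge f(\ell)+(m(C)+1)f(t_{|C|-m(C)}(y))$, hence $f(s)\ge f(t_{|C|-m(C)}(y))$, and concluding from the definition of $s$), whereas you phrase it as a contradiction. One local slip worth noting: the claim that $f$ attains its minimum over $[s,1]$ at $s$ ``because $f$ decreases only on $[-1,\ell]$'' is not valid reasoning, since $f=A^2$ oscillates and can vanish inside $[\ell,t_{FL}]$; the correct justification --- which you do supply in your parenthetical --- is that $s$ is the \emph{largest} root of $f(t)=\bigl(f_0|C|-f(\ell)\bigr)/(m(C)+1)$ while $f(1)\ge f(t_{FL})$ strictly exceeds that value, so by continuity $f>f(s)$ on all of $(s,1]$.
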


\begin{proof}
The choice of $m(C)$ shows that $m(C)f(t_{FL}) \leq f_0|C|-f(\ell)<(m(C)+1)f(t_{FL})$. Since
\[ f_0|C| \geq f(\ell)+(m(C)+1)f(t_{|C|-m(C)}) \]
as in the proof of Lemma \ref{lema_m1}, we obtain
\[ (m(C)+1)f(s)= f_0|C|-f(\ell) \geq (m(C)+1)f(t_{|C|-m(C)}). \]
Hence $f(s) \geq f(t_{|C|-m(C)})$ and the required inequality follows.
\end{proof}

Lemmas \ref{lema_m1} and \ref{lema_m1-2} imply that $t_{|C|-m(C)}(y)\leq s <t_{FL}$. We utilize this in a second optimization
dealing with the location of $t_{FL}$ between two inner products from $I(y)$. Lemmas \ref{last-n}, \ref{lema_m1} and \ref{lema_m1-2} imply that
\[ t_{|C|-m(C)}(y) \leq s < t_{FL} \leq t_{|C|-n+1}(y)=\rho(C). \]
 Therefore, there exist $j \in \{0,1,\ldots,m(C)-n\}$ such that
\begin{equation} \label{tfl-location}
t_{|C|-m(C)+j}(y) < t_{FL} \leq t_{|C|-m(C)+j+1}(y).
\end{equation}
This clarification of the location of $t_{FL}$ with respect to the points of $I(y)$ allows more precise estimations.

\begin{lemma} \label{lema_mj}
If $g(t) \in B(n,k,s)$, then $\rho(C) \geq m_{\ell,s}^{(j)}$, where $m_{\ell,s}^{(j)}$ is the largest root of the equation
\begin{equation} \label{mls} jg(t_{FL})+(m(C)-j)g(t)=g_0|C|. \end{equation}
\end{lemma}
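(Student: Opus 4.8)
The plan is to mimic the argument that established Lemma~\ref{lema_m1}, but now using a polynomial $g \in B(n,k,s)$ and exploiting the refined location \eqref{tfl-location} of $t_{FL}$ inside $I(y)$. First I would apply \eqref{defin_f} to the design $C$, the point $y$, and the polynomial $g(t)$, obtaining $g_0|C| = \sum_{i=1}^{|C|} g(t_i(y))$. The key structural facts to bring in are: every $g \in B(n,k,s)$ is nonpositive on $[-1,s]$ (so all the summands coming from inner products $t_i(y) \le s$ contribute at most $0$) and positive and increasing on $[s,1]$; and by Lemmas~\ref{lema_m1} and~\ref{lema_m1-2} together with \eqref{tfl-location} we know exactly which of the top $m(C)$ inner products of $I(y)$ lie at or above $t_{FL}$ and which lie below it.

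Next I would split the sum. By \eqref{tfl-location}, among the $m(C)$ largest inner products $t_{|C|-m(C)+1}(y) \le \cdots \le t_{|C|}(y)$, there are exactly $m(C)-j$ of them that are $\ge t_{FL}$ (namely $t_{|C|-m(C)+j+1}(y),\dots,t_{|C|}(y)$) and $j$ of them that lie in $[s,t_{FL})$ — wait, more carefully: $t_{|C|-m(C)+1}(y),\dots,t_{|C|-m(C)+j}(y)$ lie in the range where we only know they are $\le s$ via Lemma~\ref{lema_m1-2}? No: Lemma~\ref{lema_m1-2} gives $t_{|C|-m(C)}(y)\le s$, hence $t_{|C|-m(C)+1}(y),\dots,t_{|C|-m(C)+j}(y)$ may lie anywhere in $[s, t_{FL})$ or below. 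In any case, the strategy is: drop all contributions from inner products below $s$ (they are $\le 0$ since $g$ is nonpositive on $[-1,s]$), bound the $j$ inner products in $[s,t_{FL})$ from above by $g(t_{FL})$ using monotonicity of $g$ on $[s,1]$ (so each contributes at most $g(t_{FL})$), and bound the remaining $m(C)-j$ inner products in $[t_{FL},1]$, whose maximum is $t_{|C|}(y)=\rho(C)$, from above by $g(\rho(C))$ again by monotonicity. This yields
\[
g_0|C| \le j\, g(t_{FL}) + (m(C)-j)\, g(\rho(C)).
\]
Then the defining equation \eqref{mls} for $m_{\ell,s}^{(j)}$ reads $j g(t_{FL}) + (m(C)-j) g(t) = g_0|C|$; since $g$ is increasing on $[s,1]$ and $\rho(C) \ge t_{FL} > s$, the function $t \mapsto j g(t_{FL}) + (m(C)-j)g(t)$ is strictly increasing there, so the inequality $j g(t_{FL}) + (m(C)-j)g(\rho(C)) \ge g_0|C| = j g(t_{FL}) + (m(C)-j)g(m_{\ell,s}^{(j)})$ forces $\rho(C) \ge m_{\ell,s}^{(j)}$, which is the claim.

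The step I expect to require the most care is the bookkeeping of how many inner products sit in each of the three ranges $[-1,s]$, $[s,t_{FL})$, $[t_{FL},1]$, and checking that dropping the low ones is legitimate — i.e. that every inner product not among the top $m(C)$ is $\le s$. This is exactly where $m(C)$ enters: its definition and Lemma~\ref{lema_m1} guarantee $t_{|C|-m(C)}(y) < t_{FL}$, and Lemma~\ref{lema_m1-2} upgrades this to $t_{|C|-m(C)}(y)\le s$, so all inner products with index $\le |C|-m(C)$ lie in $[-1,s]$ where $g\le 0$. One should also confirm that $\deg g = 2k$ makes $g$ eligible in \eqref{defin_f} (it is, by definition of $B(n,k,s)$), and that $m_{\ell,s}^{(j)}$ is well-defined as the largest root of \eqref{mls} in $[s,1]$ — which follows because $g(t_{FL})>0$ and $g$ increases from a value below (after scaling by the coefficients) to its maximum on $[s,1]$, so a root of the equation lies in $(s,1]$ provided $g_0|C|$ falls in the appropriate range; a brief remark to that effect suffices.
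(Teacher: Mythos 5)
Your proposal is correct and follows essentially the same route as the paper's proof: the same three-way split of $I(y)$ at the indices $|C|-m(C)$ and $|C|-m(C)+j$, the same bounds $g(t_i)\le 0$, $g(t_i)\le g(t_{FL})$, $g(t_i)\le g(\rho(C))$ on the three blocks, and the same conclusion from the definition of $m_{\ell,s}^{(j)}$ as the largest root of \eqref{mls}. Your closing remarks on the bookkeeping and on the well-definedness of $m_{\ell,s}^{(j)}$ are, if anything, slightly more careful than the paper's own write-up.
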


\begin{proof}
We consider \eqref{defin_f} for $C$, $y$, and $g(t)$. Using $t_{|C|-m}(y) \leq s < t_{FL}$ and \eqref{tfl-location} we
estimate the values of $g(t)$ in the points of $I(y)$ as follows:
\[ g(t_i(y)) \leq 0, \ \ i=1,2, \ldots, |C|-m(C), \]
\[ g(t_i(y)) \leq g(t_{FL}), \ \ i=|C|-m(C)+1,|C|-m(C)+2,\ldots,|C|-m(C)+j, \]
\[ g(t_i(y)) \leq g(\rho(C)), \ \ i=|C|-m(C)+j+1,|C|-m(C)+j+2,\ldots,|C|-1,|C|. \]
Hence, we consecutively have
\begin{eqnarray*}
jg(t_{FL})+(m(C)-j)g(m_{\ell,s}^{(j)}) &=& g_0|C| =\sum_{i=1}^{|C|} g(t_i(y)) \\
&\leq& \sum_{i=|C|-m(C)+1}^{|C|-m(C)+j} g(t_i(y)) + \sum_{i=|C|-m(C)+j+1}^{|C|} g(t_i(y)) \\
&\leq& jg(t_{FL})+(m(C)-j)g(\rho(C)).
\end{eqnarray*}
This gives $g(m_{\ell,s}^{(j)}) \leq g(\rho(C))$, whence $\rho(C) \geq m_{\ell,s}^{(j)}$ by the definition of $m_{\ell,s}^{(j)}$
via \eqref{mls}.
\end{proof}

The relation \eqref{tfl-location} allows refinement of Lemma \ref{lema_m1-2} as well.

\begin{lemma} \label{lema_sj}
Let $f(t) \in A(n,k,\ell)$ be such that $f_0|C|<f(\ell)+(m(C)+1)f(t_{FL})$. Then $t_{|C|-m(C)}(y) \leq s^{(j)}$,
where $s^{(j)}$ is the largest root of the equation
\begin{equation} \label{sj}
f_0|C|=(j+1)f(t)+f(\ell)+(m(C)-j)f(t_{FL}).
\end{equation}
\end{lemma}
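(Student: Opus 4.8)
The plan is to repeat the argument of Lemma \ref{lema_m1-2}, but now exploiting the extra information from \eqref{tfl-location}: we know not only that $t_{|C|-m(C)}(y)\le s<t_{FL}$, but also that exactly $j$ of the inner products $t_{|C|-m(C)+1}(y),\dots,t_{|C|-1}(y),t_{|C|}(y)$ lie strictly below $t_{FL}$, while the remaining $m(C)-j$ of them lie in $[t_{FL},1]$. Since every $f\in A(n,k,\ell)$ is decreasing on $[-1,\ell]$ and increasing on $[t_{FL},1]$, and $\rho(C)=t_{|C|}(y)$ realizes the covering radius, the sharpest bound on the $m(C)-j$ ``large'' values is obtained by replacing each of them by $f(t_{FL})$, and the sharpest bound on the $j$ ``intermediate'' values $t_{|C|-m(C)+1}(y),\dots$ is obtained by keeping the one that is largest, namely $t_{|C|-m(C)+j}(y)$, which is still at most $t_{FL}$ but which we should not bound by $f(t_{FL})$ if we want a gain.

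First I would apply \eqref{defin_f} to $C$, $y$, and $f(t)$, and split the sum as
\[
f_0|C|=\sum_{i=1}^{|C|}f(t_i(y))
=f(t_1(y))+\sum_{i=|C|-m(C)+1}^{|C|-m(C)+j}f(t_i(y))+\sum_{i=|C|-m(C)+j+1}^{|C|}f(t_i(y))+\sum_{\text{rest}},
\]
where the ``rest'' consists of the indices $i=2,\dots,|C|-m(C)$; by the definition of $m(C)$ and Lemma \ref{lema_m1-2} these inner products lie in $[\ell,s]\subset[\ell,t_{FL}]$, so $f$ is nonnegative there and those terms contribute $\ge 0$. Next I would bound $f(t_1(y))\ge f(\ell)$ (using $t_1(y)\in[-1,\ell]$ and monotone decrease), bound the $m(C)-j$ terms with index $\ge |C|-m(C)+j+1$ by $f(t_{FL})$ from below (they lie in $[t_{FL},1]$ where $f$ increases), and — this is the only new ingredient — bound the $j$ ``intermediate'' terms below by $(j)\,f(t_{|C|-m(C)+1}(y))$? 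No: since on $[\ell,t_{FL}]$ the polynomial $f$ has $k$ zeros it is not monotone, so the correct move is the reverse one — I should bound these $j$ terms by keeping the single value $t_{|C|-m(C)}(y)$ and all $j+1$ of the smallest among the relevant block equal to $f(t_{|C|-m(C)}(y))$, mirroring exactly how $s$ arose in Lemma \ref{lema_m1-2}. Concretely, among $t_{|C|-m(C)}(y)\le t_{|C|-m(C)+1}(y)\le\cdots\le t_{|C|-m(C)+j}(y)<t_{FL}$ there are $j+1$ values all lying in $[\ell,t_{FL}]$; since the minimum of $f$ on $[\ell,t_{FL}]$ is $0$ attained at the zeros of $f$, the honest lower bound is $f(t_i(y))\ge 0$ there, and that is too weak. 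The resolution is that we do not bound the block from below at all; rather we argue by contradiction against $t_{|C|-m(C)}(y)>s^{(j)}$, exactly as in Lemma \ref{lema_m1-2}.

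So the cleanest route is: assume for contradiction $t_{|C|-m(C)}(y)>s^{(j)}$. Then $t_{|C|-m(C)}(y),\dots,t_{|C|-m(C)+j}(y)$ are $j+1$ inner products lying in $(s^{(j)},t_{FL}]$, a subinterval of $[\ell,t_{FL}]$; on this subinterval, since $s^{(j)}$ is by definition the largest root of \eqref{sj} and $f(t_{FL})$ dominates, $f$ satisfies $f(t)\ge f(s^{(j)})$ for $t\in[s^{(j)},t_{FL}]$ — here I would use that $s^{(j)}$ is chosen past the last zero of $f$, which is $\le t_{FL}$, so $f$ is increasing on $[s^{(j)},t_{FL}]$. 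Combining this with $f(t_1(y))\ge f(\ell)$, with $f(t_i(y))\ge 0$ for the genuinely small indices, and with $f(t_i(y))\ge f(t_{FL})$ for the $m(C)-j$ large ones gives
\[
f_0|C|\ \ge\ f(\ell)+(j+1)f(s^{(j)})+(m(C)-j)f(t_{FL})\ =\ f_0|C|
\]
by \eqref{sj}, forcing equality throughout; but equality in $f(t_i(y))\ge f(s^{(j)})$ for $i=|C|-m(C),\dots,|C|-m(C)+j$ together with $f(t_{|C|-m(C)}(y))>f(s^{(j)})$ (strict, from $t_{|C|-m(C)}(y)>s^{(j)}$ and strict monotonicity) is a contradiction. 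Hence $t_{|C|-m(C)}(y)\le s^{(j)}$.

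I expect the main obstacle to be the bookkeeping around which block of indices is bounded by which value, and in particular justifying that $f$ is increasing on the relevant right portion $[s^{(j)},t_{FL}]$ of $[\ell,t_{FL}]$ so that $s^{(j)}$ is well defined as the \emph{largest} root of \eqref{sj} and the comparison $f(t)\ge f(s^{(j)})$ holds there; this uses that every $f\in A(n,k,\ell)$ has all $k$ of its (double) zeros in $[\ell,t_{FL}]$, hence is monotone on the interval between its largest zero and $t_{FL}$. Once that structural fact is in hand, the rest is the same contradiction device already used twice above.
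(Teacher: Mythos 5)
Your proof is correct and follows essentially the same route as the paper's: the same block decomposition of $\sum_i f(t_i(y))$ with $f(t_1(y))\ge f(\ell)$, $f\ge 0$ on the middle indices, $f\ge f(t_{FL})$ on the top $m(C)-j$ terms, and the $j+1$ intermediate terms compared against $f(s^{(j)})$, then matched with the defining equation \eqref{sj}. The only difference is presentational: the paper argues directly via $f(t_i(y))\ge f(t_{|C|-m(C)}(y))$ on the intermediate block and then deduces $t_{|C|-m(C)}(y)\le s^{(j)}$ from $f(s^{(j)})\ge f(t_{|C|-m(C)}(y))$, whereas your contradiction framing makes explicit the fact (left implicit in the paper) that $s^{(j)}$ lies to the right of the largest zero of $f$, where $f$ is increasing, which is what legitimizes both comparisons.
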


\begin{proof}
In \eqref{defin_f} with $f$, $C$ and $y$ we have $f(t_1(y)) \geq f(\ell)$,
\[ f(t_i(y)) \geq 0, \  \ i=2,3,\ldots, |C|-m(C)-1, \]
\[ f(t_i(y)) \geq f(t_{|C|-m(C)}(y), \ \ i=|C|-m(C),|C|-m(C)+1,\ldots,|C|-m(C)+j, \]
\[ f(t_i(y)) \geq f(t_{FL}), \ \ i=|C|-m(C)+j+1, |C|-m(C)+j+2,\ldots,|C|-1,|C|. \]
Using these, we consecutively obtain
\begin{eqnarray*}
f(\ell) + (j+1)f(s^{(j)}) + (m(C)-j)f(t_{FL}) &=& f_0|C| = \sum_{i=1}^{|C|} f(t_i(y)) \\
&\geq& f(\ell) + \sum_{i=|C|-m(C)}^{|C|-m(C)+j} f(t_i(y)) + \sum_{i=|C|-m(C)+j+1}^{|C|} f(t_i(y)) \\
&\geq& f(\ell) + (j+1)f(t_{|C|-m(C)}(y)) + (m(C)-j)f(t_{FL}).
\end{eqnarray*}
We obtain $f(s^{(j)}) \geq f(t_{|C|-m(C)}(y))$, whence $t_{|C|-m(C)}(y) \leq s^{(j)}$ by the definition of $s^{(j)}$
via \eqref{sj}.
\end{proof}

\subsection{A procedure for finding new lower bounds}
\label{sec:3.3}

In the case $t_1(y) \in [-1,\ell]$, for each fixed $j \in \{0,1,\ldots,m(C)-n\}$, we can start an iterative procedure with Lemmas \ref{lema_mj} and \ref{lema_sj} for obtaining consecutive improvements of $s^{(j)}$ and $m_{\ell,s}^{(j)}$. This procedure may
converge to some bounds or may be divergent which will mean nonexistence of designs with the corresponding parameters
(dimension, strength, and cardinality).

The better bound $\rho(C) \geq t_k^{0,\ell}$ when $t_1(y) \geq \ell$ allows starting a similar procedure with analogs of
Lemmas \ref{lema_mj} and \ref{lema_sj}
as the only difference will be the absence of $\ell$.

\begin{example} \label{ex2}
Considering again $(n,\tau,|C|)=(3,4,10)$ (recall that $m(C)=n=3$ in this case, i.e. $j=0$ only), we obtain for $\ell=-0.97$ that $\rho(C) \geq 0.724753$
if $t_1(y) \in [-1,-0.97]$ and $\rho(C) \geq 0.728787$ if $t_1(y) \geq -0.97$. Therefore, we have $\rho(C) \geq 0.724753$ in the worst case.

Similarly, for $(n,\tau,|C|)=(4,4,15)$ (note that now $m(C)=5$, i.e. $j=0,1$), we obtain for $\ell=-0.97$ that $\rho(C) \geq 0.625572$ if
$t_1(y) \in [-1,-0.97]$ and $\rho(C) \geq 0.627354$ if $t_1(y) \geq -0.97$ for $j=0$; $\rho(C) \geq 0.616854$ if $t_1(y) \in [-1,-0.97]$ and $\rho(C) \geq 0.619259$ if $t_1(y) \geq -0.97$ for $j=1$. Summarizing, we conclude that $\rho(C) \geq 0.616854$ in the worst case.
\end{example}

Further results of this procedure with new lower bounds are given in a table below along with upper bounds in the corresponding cases.

\section{Upper bounds}
\label{sec:4}

In this section we obtain upper bounds for the covering radius of spherical designs of fixed dimension, strength and cardinality.
Note that our approach works both for odd and even strengths.

\begin{remark}
Upper bounds for covering radius of spherical designs were presented by the authors in \cite{BS05} but were never
recorded in a journal publication.
\end{remark}

\subsection{General upper bounds}
\label{sec:4.1}

Using Lemma \ref{last-n} we write (\ref{defin_f}) for $y$, $C$ and $f(t)$,
$\deg(f) \leq \tau(C)$, as
\begin{equation} \label{main_tc}
nf(\rho(C))+\sum_{i=1}^{|C|-n} f(t_i(y)) = f_0|C|.
\end{equation}
The identity \eqref{main_tc} provides upper bounds for $\rho(C)$ as follows.

\begin{theorem} \label{lp-cr-designs}
{\rm (Linear programming upper bounds of the
covering radius of spherical designs)} Let $f(t)$, $\deg(f)\leq
\tau$, be a real polynomial which is nonnegative in $[-1,t_{FL}]$ and increasing in $[t_{FL},1]$. Then
for every $\tau$-design $C \subset \mathbb{S}^{n-1}$ we have
\[ \rho(C) \leq m_u, \]
where $m_u$ is the largest root of the equation $nf(t)=f_0|C|$.
\end{theorem}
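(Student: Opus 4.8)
The plan is to mimic the argument used for the lower bounds in Section~\ref{sec:3}, but now exploiting the \emph{positivity} of $f$ on $[-1,t_{FL}]$ and its monotonicity near $1$ to push an inequality in the opposite direction. Concretely, I would start from the identity \eqref{main_tc}, which already isolates the $n$ copies of $f(\rho(C))$ guaranteed by Lemma~\ref{last-n}, and bound the remaining sum $\sum_{i=1}^{|C|-n} f(t_i(y))$ from below by $0$ using the hypothesis that $f$ is nonnegative on $[-1,t_{FL}]$.

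The only subtlety — and this is where I expect the main work to sit — is that the inner products $t_i(y)$ for $i \le |C|-n$ are a priori only known to lie in $[-1,\rho(C)]$, and $\rho(C)$ could in principle exceed $t_{FL}$, so $f$ need not be nonnegative at all of them. However, by the Fazekas--Levenshtein bound \eqref{FL_bound} we have $\rho(C) \ge t_{FL}$, and more importantly the $t_i(y)$ with $i \le |C|-n$ are separated from $\rho(C)$; in fact Lemma~\ref{last-n} tells us exactly that $t_{|C|},\dots,t_{|C|-n+1}$ equal $\rho(C)$, but says nothing forcing $t_{|C|-n}(y) < t_{FL}$. The clean way around this is to observe that for the purpose of an \emph{upper} bound we may assume $\rho(C) \ge t_{FL}$ (otherwise the claimed bound $m_u$, which satisfies $m_u \ge t_{FL}$ since $f$ is increasing past $t_{FL}$ and $nf(t_{FL}) \le n f_0 \le f_0|C|$, holds trivially), and then for every $i$ with $t_i(y) \le t_{FL}$ we have $f(t_i(y)) \ge 0$, while for every $i$ with $t_{FL} < t_i(y) \le \rho(C)$ monotonicity of $f$ on $[t_{FL},1]$ gives $f(t_i(y)) \le f(\rho(C))$; grouping the latter with the $n$ guaranteed copies only \emph{increases} the number of $f(\rho(C))$-terms, so replacing the whole sum by $n' f(\rho(C))$ with $n' \ge n$ and using monotonicity again (each $f(\rho(C)) \ge f(t_{FL}) \ge 0$) we still get $f_0|C| \ge n f(\rho(C))$ after discarding the nonnegative leftover terms. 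Hence
\[
 n f(\rho(C)) \le f_0|C|.
\]

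Finally, since $f$ is increasing on $[t_{FL},1]$ and $\rho(C) \ge t_{FL}$, the function $t \mapsto nf(t) - f_0|C|$ is increasing on $[t_{FL},1]$, so it vanishes at a unique point $m_u$ of that interval (existence because at $t=t_{FL}$ the value is $nf(t_{FL}) - f_0|C| \le nf_0 - f_0|C| = f_0(n-|C|) \le 0$ as $|C| \ge D(n,\tau) \ge n$, and at $t=1$ one checks it is $\ge 0$ in the non-trivial regime, or else $m_u$ is declared to be $1$). The inequality $nf(\rho(C)) \le f_0|C| = nf(m_u)$ together with monotonicity then yields $\rho(C) \le m_u$, which is the assertion. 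The main obstacle, as indicated, is purely bookkeeping: making sure the terms $t_i(y)$ lying in $(t_{FL},\rho(C)]$ are handled correctly, and this is resolved by noting they can only help the inequality $nf(\rho(C)) \le f_0|C|$.
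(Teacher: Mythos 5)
Your proposal is correct and follows essentially the same route as the paper: isolate the $n$ copies of $f(\rho(C))$ via Lemma~\ref{last-n} and \eqref{main_tc}, drop the remaining nonnegative terms to get $nf(\rho(C))\leq f_0|C|$, and finish by monotonicity on $[t_{FL},1]$ together with $\rho(C)\geq t_{FL}$. The ``subtlety'' you labor over is actually immediate -- since $f\geq 0$ on $[-1,t_{FL}]$ and $f$ is increasing on $[t_{FL},1]$, $f$ is nonnegative on all of $[-1,1]$, so every discarded term is $\geq 0$ with no case distinction needed -- which is exactly how the paper disposes of it in one line.
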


\begin{proof}
Under the assumptions of the theorem we have $f(t_i(y))
\geq 0$ for $1 \leq i \leq |C|-n$. Also, Lemma 2.1 implies $f(t_i(y))
=f(\rho(C))$ for $|C|-n+1 \leq i \leq |C|$. Then (\ref{main_tc}) gives
the inequality $nf(\rho(C)) \leq f_0|C|$. Since $f(t)$ is increasing
in the interval $[t_{FL},+\infty)$, this implies our claim.
\end{proof}

The following theorem shows which kind of extremal polynomials
should be investigated.

\begin{theorem} \label{best-poly}
The best polynomials for use in Theorem \ref{lp-cr-designs}
are $f(t)=(t+1)^e A^2(t)$, where $\tau=2k-e$,
$e \in \{0,1\}$, $\deg(A)=k-e$ and $A(t)$ has
$k-e$ zeros in $[-1,t_{FL}]$.
\end{theorem}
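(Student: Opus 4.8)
The plan is to show that any admissible polynomial $f(t)$ for Theorem \ref{lp-cr-designs} — that is, $\deg(f)\le\tau$, nonnegative on $[-1,t_{FL}]$, increasing on $[t_{FL},1]$ — can be replaced, without weakening the resulting bound $m_u$, by one of the special form $(t+1)^eA^2(t)$ described in the statement; and conversely that polynomials of this form are indeed admissible. The bound $m_u$ produced by $f$ is the largest root of $nf(t)=f_0|C|$; making it as small as possible means we want $f$ to grow as fast as possible past $t_{FL}$ relative to its Gegenbauer constant term $f_0$, while still being controlled (nonnegative) on $[-1,t_{FL}]$. So the optimization is really: among admissible $f$, minimize the largest solution of $nf(t)=f_0|C|$.

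First I would record the structural constraints a candidate $f$ must satisfy and argue we may assume $f\ge 0$ on all of $[-1,1]$: since $f$ is nonnegative on $[-1,t_{FL}]$ and increasing on $[t_{FL},1]$, it is automatically nonnegative on the whole interval, and the relevant root $m_u$ lies in $(t_{FL},1]$ where $f$ is monotone. A nonnegative polynomial of degree $\le\tau=2k-e$ on $[-1,1]$ has, by the classical representation of nonnegative polynomials on an interval, the form $A^2(t)+(1-t^2)B^2(t)$ (for $\tau$ even) or $(t+1)A^2(t)+(1-t)B^2(t)$ (for $\tau$ odd), with appropriate degree bounds. Next I would argue that the ``extra'' squared piece only hurts us: replacing $f$ by a polynomial that agrees with the forced boundary factor $(t+1)^e$ times a single square $A^2$ whose zeros are pushed into $[-1,t_{FL}]$ both keeps $f$ nonnegative there and, by a Markov–Levenshtein-type extremality argument for the quadrature/positivity functional, cannot increase $f_0$ relative to the value $f(1)$ (equivalently relative to the growth rate on $[t_{FL},1]$). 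This is the step where I expect to invoke, explicitly or in spirit, the quadrature formula of Theorem \ref{QFtheorem} (or the analogous Gauss–Jacobi quadrature with node $t_{FL}=t_k^{0,1}$): $f_0$ is a positive combination of the values $f(t_{FL})$ and $f$ at the interior nodes, so to minimize $f_0/f(1)$ one wants $f$ to vanish at as many interior nodes as possible below $t_{FL}$, i.e. $A$ should have all $k-e$ of its zeros in $[-1,t_{FL}]$.

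Then I would verify the converse direction — that every $f(t)=(t+1)^eA^2(t)$ with $\deg A=k-e$ and all zeros of $A$ in $[-1,t_{FL}]$ is genuinely admissible for Theorem \ref{lp-cr-designs}: such $f$ has degree $e+2(k-e)=2k-e=\tau$, is manifestly $\ge 0$ on $[-1,1]$, and on $[t_{FL},1]$ it is a product of factors $(t+1)^e$ and $(t-t_{k-e,j})^2$ with every $t_{k-e,j}\le t_{FL}$, each of which is nonnegative and increasing there, so the product is increasing on $[t_{FL},1]$ — precisely the hypothesis needed. I would also note $f_0>0$ since $f$ is a nonzero nonnegative polynomial, so the equation $nf(t)=f_0|C|$ has a genuine largest root in $(t_{FL},1)$ once $|C|>n$.

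The main obstacle will be making the extremality step rigorous rather than heuristic: it is intuitively clear that the best $f$ is a perfect square (times the forced factor) with all its mass of zeros crammed to the left of $t_{FL}$, but the clean way to see this is to set up the right linear functional — the Gauss–Jacobi quadrature at the zeros of $(t+1)^eP_k^{0,1}(t)$ (equivalently, at $t_{FL}$ and the remaining Jacobi nodes) — and show that for admissible $f$ the quantity to be minimized is a positive-weight combination over those nodes, minimized exactly when $f$ vanishes at all nodes strictly below $t_{FL}$. One has to be careful that pushing zeros to the boundary $t_{FL}$ stays within the admissible class and that the degree count $k-e$ is forced (not $k-e-1$ or less), which follows because a square of lower degree would leave room for an extra interior node where $f>0$, strictly increasing $f_0$. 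I would present this as the content of the ``best'' claim and defer the fully detailed quadrature bookkeeping, as it parallels the arguments already given for Theorems \ref{roots-of-p0-ell} and \ref{QFtheorem}.
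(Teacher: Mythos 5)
Your route is genuinely different from the paper's, but the central step is missing. The paper factors an admissible $f$ multiplicatively as $f(t)=B(t)D(t)$, with $B\ge 0$ collecting the double zeros in $[-1,1]$ (and possibly the factor $t+1$) and $D$ nonconstant and zero-free on $[-1,1]$, and then shows that the perturbation $g(t)=B(t)(D(t)-\varepsilon)$ remains admissible for small $\varepsilon>0$, satisfies $g_0=f_0-\varepsilon b_0<f_0$, and produces a strictly smaller largest root of $ng(t)=g_0|C|$; the comparison of the two roots is carried out pointwise at the root of the $f$-equation using the monotonicity of $g$ on $[t_{FL},1]$. You instead propose an additive Luk\'{a}cs decomposition together with a Gauss--Jacobi quadrature argument, and you explicitly defer the ``quadrature bookkeeping'' that would make the extremality claim rigorous.

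That deferred step is a genuine gap, and not merely a technical one. The quantity to be minimized is the largest root $m_u$ of $nf(t)=f_0|C|$, which is neither a linear functional of $f$ nor determined by the ratio $f_0/f(1)$ that your sketch reduces to: two admissible polynomials with the same $f_0$ and the same value at $1$ can cross the level $f_0|C|/n$ at very different points, since $m_u$ depends on the whole profile of $f$ on $(t_{FL},1)$. A quadrature formula with nodes in $[-1,t_{FL}]$ controls how small $f_0$ can be relative to the values of $f$ at those nodes, but says nothing about where $f$ reaches the level $f_0|C|/n$ to the right of $t_{FL}$; so ``vanishing at all interior nodes'' does not by itself imply a smaller bound. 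Some device that directly orders the roots of the two equations --- such as the paper's $\varepsilon$-perturbation combined with monotonicity on $[t_{FL},1]$ --- is indispensable and is absent from your argument. What you do prove correctly is the converse half (that every $f(t)=(t+1)^eA^2(t)$ with the zeros of $A$ in $[-1,t_{FL}]$ is admissible and of degree $\tau$), which the paper does not even spell out; but the optimality assertion, which is the content of the theorem, is not established.
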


\begin{proof}
Assume for a contradiction that $f(t)=B(t)D(t)$, where
$B(t) \geq 0$ has only double zeros in $[-1,1]$ and possibly $-1$ as a zero
and $D(t)$ is a nonconstant polynomial
which does not have zeros in $[-1,1]$. Then there exists (small enough)
$\varepsilon>0$ such that $g(t)=B(t)(D(t)-\varepsilon) \geq 0$ for $t \in [-1,1]$.

Denote by $t_1^{(u)}$ and $t_2^{(u)}$ the largest roots of the
equations $nf(t)=f_0|C|$ and $ng(t)=g_0|C|$, respectively. We will
show that there exists some $\varepsilon$ such that $t_2^{(u)}<
t_1^{(u)}$, i.e. the polynomial $g(t)$ gives better upper bound on
$\rho(C)$, thus obtaining a contradiction.

Since the derivative of $g(t)$ is
$g^\prime(t)=f^\prime(t)-\varepsilon B^\prime(t)>0$ for every
$t \in [t_{FL},1]$ and for small enough fixed $\varepsilon$, the polynomial $g(t)$
is strictly increasing (for such $\varepsilon$). Then the inequality
$t_2^{(u)}< t_1^{(u)}$ follows from $g(t_2^{(u)})< g(t_1^{(u)})$, which is
equivalent to
\[ \varepsilon<\frac{|C|(f_0-g_0)}{nB(t_1^{(u)})}. \]

We have $g_0=f_0-\varepsilon b_0$, where $b_0$ is the first
coefficient in the Gegenbauer expansion of $B(t)$. Since
$b_0=c_n\int_{-1}^1B(t)(1-t^2)^{(n-3)/2}dt>0$, we have
$g_0<f_0$. Therefore $\frac{|C|(f_0-g_0)}{nB(t_1^{(u)})}>0$ and
it is clear now that we can choose the necessary small enough $\varepsilon$.
\end{proof}

The best polynomials $A(t)$ still have to be found. We pursue this for $\tau=4$ below.

\subsection{Upper bounds for spherical 4-designs}
\label{sec:4.2}

We now find the optimal polynomials in Theorem \ref{lp-cr-designs} for $\tau=4$.

\begin{theorem} \label{ub-thm}
If $C \subset \mathbb{S}^{n-1}$ is a spherical 4-design, then
\[ \rho(C) \leq  u(a_0,b_0), \]
where the function $u(a,b)$ and the (optimal) parameters $a_0$ and $b_0$ are defined in the proof.
\end{theorem}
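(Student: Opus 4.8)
The plan is to carry out the optimization announced in Theorem \ref{best-poly} in the case $\tau=4$, $e=0$, so that the relevant polynomials are $f(t)=A^2(t)$ with $\deg(A)=2$ and both zeros of $A$ in $[-1,t_{FL}]$. Writing $A(t)=(t-a)(t-b)$ with $-1\le a\le b\le t_{FL}$, the polynomial $f(t)=(t-a)^2(t-b)^2$ is automatically nonnegative on $[-1,1]$, and one must first record the constraint on $(a,b)$ guaranteeing that $f$ is increasing on $[t_{FL},1]$ (equivalently, that $f'(t)=2(t-a)(t-b)(2t-a-b)>0$ for $t\in(t_{FL},1]$, which holds precisely when $b\le t_{FL}$, since then all three linear factors are positive there). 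So the admissible region is simply $\mathcal R=\{(a,b): -1\le a\le b\le t_{FL}\}$.

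Next I would compute the Gegenbauer coefficient $f_0=\int_{-1}^1 f(t)\,d\mu(t)$ explicitly. Since $d\mu(t)=c_n(1-t^2)^{(n-3)/2}dt$ is symmetric, only even moments $\gamma_{2j}:=\int_{-1}^1 t^{2j}d\mu(t)$ survive; expanding $f(t)=(t-a)^2(t-b)^2$ gives $f_0$ as an explicit polynomial in $a,b$ with coefficients built from $\gamma_0=1$, $\gamma_2=1/n$, $\gamma_4=3/(n(n+2))$. Theorem \ref{lp-cr-designs} then yields $\rho(C)\le m_u$, where $m_u$ is the largest root of $n\,A^2(t)=f_0|C|$, i.e. $A(t)=\sqrt{f_0|C|/n}$, which for $t>b$ reads $(t-a)(t-b)=\sqrt{f_0|C|/n}$; solving this quadratic gives the closed form
\[
u(a,b)=\frac{a+b}{2}+\sqrt{\left(\frac{b-a}{2}\right)^2+\sqrt{\frac{f_0(a,b)\,|C|}{n}}}\,,
\]
which is the function $u$ named in the statement. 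It remains to minimize $u(a,b)$ over $\mathcal R$; I would argue that the minimum is attained at an interior critical point $(a_0,b_0)$ (or, if not interior, on the boundary piece $b=t_{FL}$, which recovers a one-parameter family), defined by $\partial u/\partial a=\partial u/\partial b=0$. These stationarity equations are equivalent, after clearing the square roots, to a pair of polynomial equations in $a,b$ (and $n$, $|C|$); that system defines $(a_0,b_0)$ implicitly, and $u(a_0,b_0)$ is the claimed bound.

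The main obstacle is the final optimization: the critical-point system for $u(a,b)$ is genuinely nonlinear, and one should not expect a clean radical formula for $(a_0,b_0)$ for general $n$ and $|C|$ — hence the theorem is phrased with $a_0,b_0$ "defined in the proof" rather than given in closed form. The substantive work is (i) verifying that $u$ is smooth and strictly convex-like enough on $\mathcal R$ that a unique minimizer exists, (ii) checking the boundary behavior (the edges $a=-1$, $a=b$, and $b=t_{FL}$) so one knows whether the optimum is interior, and (iii) writing down the stationarity equations in usable form and confirming they have a solution in $\mathcal R$. Everything else — nonnegativity of $f$, monotonicity on $[t_{FL},1]$, the explicit evaluation of $f_0$, and the solution of the quadratic giving $u$ — is routine once the setup above is in place.
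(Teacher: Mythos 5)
Your setup coincides with the paper's: you use the same family $f=A^2$ with $\deg A=2$ singled out by Theorem \ref{best-poly}, your moments $\gamma_2=1/n$, $\gamma_4=3/(n(n+2))$ give exactly the paper's $f_0=b^2+\frac{a^2+2b}{n}+\frac{3}{n(n+2)}$ once you pass from your root coordinates to the coefficient coordinates $f(t)=(t^2+at+b)^2$ that the paper uses, and your
\[
u(a,b)=\frac{a+b}{2}+\sqrt{\Bigl(\frac{b-a}{2}\Bigr)^2+\sqrt{\frac{f_0|C|}{n}}}
\]
is the paper's $u(a,b)$ under that change of variables. Your admissibility analysis (both roots in $[-1,t_{FL}]$ forces $f$ increasing on $[t_{FL},1]$) is correct, and in fact more careful than the paper, which never checks this hypothesis of Theorem \ref{lp-cr-designs} for the optimal parameters.

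The gap is at the final, essential step. The statement explicitly defers to the proof for the definition of $a_0$ and $b_0$, and the paper supplies them in closed form by solving the stationarity system:
\[
b_0=\frac{3n(n+1)-(n+2)|C|+\sqrt{n(n-1)\bigl[2(n+2)|C|-3n(n+3)\bigr]}}{n(n+2)(|C|-2n)},\qquad
a_0=\frac{nb_0+1}{n}\sqrt{\frac{|C|(nb_0+1)-n^2b_0}{nb_0+2}}.
\]
You stop at ``the stationarity equations define $(a_0,b_0)$ implicitly'' and assert that no clean radical formula should be expected for general $n$ and $|C|$; that assertion is false, as the formulas above show. Without either the explicit solution or at least a proof that the stationary system has a solution in the admissible region which is a genuine minimum, the quantity $u(a_0,b_0)$ appearing in the theorem is not actually defined, so the theorem is not proved. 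The three items you defer as remaining work (existence and location of the minimizer, boundary analysis, solving the stationarity equations) are precisely the substantive content of the paper's proof rather than routine follow-up.
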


\begin{proof}
Let $f(t)=(t^2+at+b)^2$, where $a$ and $b$ are parameters to be optimized. Then
\[ f_0=b^2+\frac{a^2+2b}{n}+\frac{3}{n(n+2)} \]
and we have to consider the inequality
\[ (\rho(C))^2+a\rho(C)+b)^2 \leq
\frac{|C|}{n}\left(b^2+\frac{a^2+2b}{n}+\frac{3}{n(n+2)}\right).\]
We obtain
\[ \rho(C) \leq u(a,b) :=-\frac{a}{2}+\frac{1}{2}\left(a^2-4b+
4\sqrt{\frac{|C|}{n}\left(b^2+\frac{a^2+2b}{n}+\frac{3}{n(n+2)}\right)}
\hspace*{0.4mm}\right)^{1/2},
\] where we have to minimize the function $u(a,b)$. Using the
standard approach via partial derivatives we obtain the following
optimal values of $a$ and $b$
\begin{eqnarray*}
b_0 & :=&
\frac{3n(n+1)-(n+2)|C|+\sqrt{n(n-1)\left[2(n+2)|C|-3n(n+3)\right]}}
{n(n+2)\left(|C|-2n\right)}, \\
a_0 & :=& \frac{nb_0+1}{n}\sqrt{\frac{|C|(nb_0+1)-n^2b_0}{nb_0+2}}.
\end{eqnarray*}
The corresponding bound $\rho(C) \leq u(a_0,b_0)$ is rather long to be stated here.
\end{proof}

\begin{example}
Looking again in the first open case $(n,\tau,|C|)=(3,4,10)$, we obtain
(for $b_0 = \frac{\sqrt{69}-7}{30}$ and $a_0 = $ $\frac{(3+\sqrt{69})\sqrt{45+10\sqrt{69}}}{150}$)
the upper bound $\rho(C) \leq u(a_0,b_0) \approx 0.754443$.
\end{example}

In the table below we present new lower and upper bounds for the covering radius of
spherical 4-designs in dimensions $3 \leq n \leq 10$ and cardinalities $|C|=D(n,4)+1$ and
$D(n,4)+2$. The lower bounds are truncated after the sixth digit while the upper bounds are
rounded from in the fourth digit.

\begin{table}
\caption{}
\label{tab:2}
\begin{center}
\begin{tabular}{|c|c|c|c|c|c|}
  \hline
  Dimension & Cardinality &  $m(C)$  & Fazekas- & New lower & Upper bound, \\
  $n$ & $|C|$ & & Levenshtein & bound,  & Theorem \ref{ub-thm} \\
&&& lower bound & $\ell=-0.97$ & \\
&&& $\rho(C) \geq t_2^{0,1}$ && \\
  \noalign{\smallskip}\hline\noalign{\smallskip}
  3 & 10 &  3 & 0.689897 & 0.724753 & 0.7545 \\
  \noalign{\smallskip}\hline\noalign{\smallskip}
  3 & 11 &  4 & 0.689897 & 0.694717 & 0.7794 \\
  \noalign{\smallskip}\hline\noalign{\smallskip}
  4 & 15 &  5 & 0.607625 & 0.616854 & 0.6918 \\
  \noalign{\smallskip}\hline\noalign{\smallskip}
  4 & 16 &  5 & 0.607625 & 0.610537 & 0.7072 \\
  \noalign{\smallskip}\hline\noalign{\smallskip}
  5 & 21 &  7 & 0.546918 & 0.550012 & 0.6503 \\
  \noalign{\smallskip}\hline\noalign{\smallskip}
  5 & 22 &  8 & 0.546918 & 0.548132 & 0.6604 \\
  \noalign{\smallskip}\hline\noalign{\smallskip}
  6 & 28 & 10 & 0.500000 & 0.501717 & 0.6198 \\
  \noalign{\smallskip}\hline\noalign{\smallskip}
  6 & 29 & 10 & 0.500000 & 0.501288 & 0.6269 \\
  \noalign{\smallskip}\hline\noalign{\smallskip}
  7 & 36 & 13 & 0.462475 & 0.463455 & 0.5960 \\
  \noalign{\smallskip}\hline\noalign{\smallskip}
  7 & 37 & 13 & 0.462475 & 0.462961 & 0.6012 \\
  \noalign{\smallskip}\hline\noalign{\smallskip}
  8 & 45 & 16 & 0.431662 & 0.431663 & 0.5766 \\
  \noalign{\smallskip}\hline\noalign{\smallskip}
  8 & 46 & 17 & 0.431662 & 0.432103 & 0.5805 \\
  \noalign{\smallskip}\hline\noalign{\smallskip}
  9 & 55 & 20 & 0.405827 & 0.405915 & 0.5602 \\
  \noalign{\smallskip}\hline\noalign{\smallskip}
  9 & 56 & 21 & 0.405827 & 0.406039 & 0.5633 \\
  \noalign{\smallskip}\hline\noalign{\smallskip}
 10 & 66 & 25 & 0.383795 & 0.383922 & 0.5461 \\
  \noalign{\smallskip}\hline\noalign{\smallskip}
 10 & 67 & 25 & 0.383795 & 0.383972 & 0.5486 \\
  \noalign{\smallskip}\hline
\end{tabular}
\end{center}
\end{table}

\subsection{Upper bounds for antipodal 3- and 5-designs}
\label{sec:4.3}

A spherical design $C$ is called antipodal if $C=-C$. Since the set $I(y)$ is symmetric for antipodal designs,
only small modifications in Lemma \ref{last-n}, the identity \eqref{main_tc} and Theorem \ref{lp-cr-designs} are needed.
Indeed, we have
\[ t_i=t_{|C|-i+1}, \ i=1,2,\ldots,n, \]
in Lemma \ref{last-n} and the equation in Theorem \ref{lp-cr-designs} becomes $2nf(t)=f_0|C|$. Recall that the Fazekas-Levenshtein
bound \eqref{FL_bound} for $(2k-1)$-designs is $t_{FL}=t_k^{0,0}=t_{k,k}$.

The upper bounds for antipodal designs are easier for $\tau=3$ and
$\tau=5$. We are able to obtain an explicit bound in these cases
for all dimensions and cardinalities.

\begin{theorem} \label{cr-3-anti}
If $C$ is an antipodal 3-design, then
\[ t_{FL}=\frac{1}{\sqrt{n}} \leq \rho(C) \leq \frac{1}{n}\sqrt{\frac{|C|}{2}}. \]
\end{theorem}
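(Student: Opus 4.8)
The statement has two parts: the lower bound $\rho(C)\ge 1/\sqrt{n}$, which is just the Fazekas-Levenshtein bound \eqref{FL_bound} specialized to antipodal $3$-designs ($k=2$, $e=0$), and for which the note right before the theorem already records $t_{FL}=t_2^{0,0}=t_{2,2}=1/\sqrt n$ (the largest zero of $P_2^{(n)}(t)=(nt^2-1)/(n-1)$). So only the upper bound $\rho(C)\le \frac1n\sqrt{|C|/2}$ needs real work, and the plan is to apply the antipodal version of Theorem \ref{lp-cr-designs} with the simplest admissible polynomial of degree $\tau=3$.

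First I would choose, following Theorem \ref{best-poly} with $e=1$, $k=2$, so $\deg(A)=k-e=1$ and $A$ has one zero in $[-1,t_{FL}]$; the cleanest choice is $A(t)=t$, giving $f(t)=(t+1)t^2=t^3+t^2$, which is nonnegative on $[-1,1]\supset[-1,t_{FL}]$ and has $f'(t)=3t^2+2t=t(3t+2)>0$ on $[t_{FL},1]=[1/\sqrt n,1]$ since $1/\sqrt n>0$, hence $f$ is increasing there. Next I would compute the Gegenbauer coefficient $f_0$: using $\int_{-1}^1 t^{\,2j+1}\,d\mu(t)=0$ and $\int_{-1}^1 t^2\,d\mu(t)=1/n$ (equivalently $f_0=c_n\int_{-1}^1(t^3+t^2)(1-t^2)^{(n-3)/2}dt$), the odd part integrates to zero and $f_0=1/n$. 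Then the antipodal form of the master identity, $2n f(\rho(C))\le f_0|C|$ (from $2nf(t)=f_0|C|$ in the remark preceding the theorem together with $f(t_i(y))\ge0$ for the remaining inner products), reads $2n\big(\rho(C)^3+\rho(C)^2\big)\le |C|/n$, i.e. $2n^2\rho^2(1+\rho)\le|C|$.

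Finally I would solve this for $\rho=\rho(C)$. Rather than inverting the cubic exactly, I would bound: the theorem claims $\rho(C)\le \frac1n\sqrt{|C|/2}=:m_u$, so it suffices to check $2n^2 m_u^2(1+m_u)\ge |C|$ is \emph{not} needed — instead, since $2n^2\rho^2(1+\rho)$ is strictly increasing in $\rho>0$ and equals $|C|$ at $\rho=m_u$ would give exactly the bound; but $m_u$ as stated solves only $2n^2\rho^2=|C|$, ignoring the factor $(1+\rho)$. So in fact the honest conclusion is that $\rho(C)$ is the largest root of $2n^2t^2(1+t)=|C|$, which is $\le \frac1n\sqrt{|C|/2}$ because $1+t>1$ for $t>0$ forces that root below the root of $2n^2t^2=|C|$. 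That inequality $\rho(C)\le m_u$ is exactly the claim, and it is clean. The main obstacle is therefore not hard: it is simply making sure the chosen $f(t)=(t+1)t^2$ genuinely satisfies the hypotheses of the antipodal version of Theorem \ref{lp-cr-designs} (nonnegativity on the correct interval and monotonicity past $t_{FL}$, both immediate here) and correctly transcribing the antipodal modification ($2n$ in place of $n$, with $n$ pairs of equal extreme inner products $t_i=t_{|C|-i+1}$), after which the bound drops out by the monotone-root argument above.
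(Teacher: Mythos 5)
Your lower-bound discussion and the computation $f_0=1/n$ are fine, but there is a genuine error in how you apply the antipodal version of Theorem \ref{lp-cr-designs}. For an antipodal design the identity \eqref{main_tc} becomes
\[
n\bigl[f(\rho(C))+f(-\rho(C))\bigr]+\sum_{i=n+1}^{|C|-n} f(t_i(y))=f_0|C|,
\]
and the doubling $n[f(\rho)+f(-\rho)]=2nf(\rho)$ that turns this into ``$2nf(t)=f_0|C|$'' requires $f$ to be \emph{even} (this is exactly how the paper uses it in Theorem \ref{cr-5-anti}, where $f(t)=(t^2-a)^2$). Your polynomial $f(t)=(t+1)t^2$ is not even: $f(-\rho)=(1-\rho)\rho^2<(1+\rho)\rho^2=f(\rho)$, so the identity only yields $n[(1+\rho)\rho^2+(1-\rho)\rho^2]=2n\rho^2\le |C|/n$, \emph{not} your claimed $2n(\rho^3+\rho^2)\le |C|/n$. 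Your intermediate inequality is in fact false: for the cross-polytope $\{\pm e_i\}\subset\mathbb{S}^{n-1}$ (an antipodal $3$-design with $|C|=2n$ and $\rho=1/\sqrt{n}$) one has $2n(1+\rho)\rho^2=2(1+1/\sqrt{n})>2=|C|/n$. So the cubic-root refinement you derive from it is unjustified, and the claim that you have proved something strictly stronger than the theorem should itself have been a warning sign.

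The repair is immediate and lands you on the paper's own proof: either take $f(t)=t^2$ from the start, or observe that for antipodal designs you may replace $f$ by its even part $\tfrac12(f(t)+f(-t))=t^2$ without changing $\sum_{x\in C}f(\langle x,y\rangle)$ or $f_0$. Then $2n\rho(C)^2\le f_0|C|=|C|/n$ gives $\rho(C)\le\frac1n\sqrt{|C|/2}$ directly, which is exactly the paper's one-line argument.
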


\begin{proof}
We use $f(t)=t^2$ in Theorem \ref{lp-cr-designs} for antipodal designs.
Then $f_0=1/n$ and we have to find the largest root of the
equation $2n^2t^2=|C|$.
\end{proof}

\begin{theorem} \label{cr-5-anti}
If $C$ is an antipodal 5-design, then
\[ t_{FL}=\left(\frac{3}{n+2} \right)^{1/2} \leq \rho(C) \leq
\left(\frac{1}{n}+\frac{1}{n}\sqrt{\frac{(n-1)(|C|-2n)}{n(n+2)}}\hspace*{0.4mm}
\right)^{1/2}. \]
\end{theorem}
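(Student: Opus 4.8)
The plan is to apply Theorem~\ref{lp-cr-designs} in its antipodal form to a carefully chosen degree-$4$ polynomial, mirroring the proof of Theorem~\ref{cr-3-anti}. By Theorem~\ref{best-poly} with $\tau=5=2k-e$, $k=3$, $e=1$, the optimal polynomial has the shape $f(t)=(t+1)A^2(t)$ with $\deg A=2$; however, for an antipodal design the relevant identity only involves even polynomials, so the factor $(t+1)$ may be symmetrized and we are effectively optimizing over even polynomials of degree $4$. Concretely, I would take $f(t)=(t^2-c)^2$ for a parameter $c$ to be chosen (so that $f$ is nonnegative on $[-1,1]$, vanishes at $t=\pm\sqrt c$, and is increasing on $[t_{FL},1]$ once $c\le t_{FL}^2=3/(n+2)$). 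This is an even polynomial of degree $4\le 5=\tau$, hence admissible.

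The key computational step is to expand $f(t)=(t^2-c)^2=t^4-2ct^2+c^2$ in Gegenbauer polynomials $P_i^{(n)}(t)$ and read off the constant term $f_0$. Using the standard moments $\int_{-1}^1 t^2\,d\mu(t)=1/n$ and $\int_{-1}^1 t^4\,d\mu(t)=3/(n(n+2))$, one gets
\[
f_0=\frac{3}{n(n+2)}-\frac{2c}{n}+c^2.
\]
Then the antipodal version of Theorem~\ref{lp-cr-designs} (with $2nf(t)=f_0|C|$ in place of $nf(t)=f_0|C|$) yields $\rho(C)\le m_u$, where $m_u$ is the largest root of $2n(t^2-c)^2=f_0|C|$, i.e.
\[
\rho(C)^2\le c+\sqrt{\frac{|C|f_0}{2n}}=c+\sqrt{\frac{|C|}{2n}\left(c^2-\frac{2c}{n}+\frac{3}{n(n+2)}\right)}.
\]
It remains to minimize the right-hand side over admissible $c$; differentiating in $c$ and solving the resulting equation should give the optimal $c$, and substituting it back must collapse the expression to the stated
\[
\left(\frac{1}{n}+\frac{1}{n}\sqrt{\frac{(n-1)(|C|-2n)}{n(n+2)}}\right)^{1/2}.
\]

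The main obstacle I anticipate is purely algebraic: showing that the optimal $c$ produces exactly that closed form, and simultaneously checking the admissibility constraint $c\le 3/(n+2)$ so that $f$ is indeed increasing on $[t_{FL},1]$ (otherwise Theorem~\ref{lp-cr-designs} does not apply and the bound would have to be taken at the endpoint instead). I would handle this by setting the derivative $\frac{d}{dc}\!\left[c+\sqrt{(|C|/2n)(c^2-2c/n+3/(n(n+2)))}\right]=0$, solving the quadratic for $c$, and verifying by direct substitution that the square-bracketed quantity equals $\frac1n\bigl(1+\sqrt{(n-1)(|C|-2n)/(n(n+2))}\bigr)$; the optimal $c$ should turn out to be $c=\frac1n-\frac1n\sqrt{(n-1)(|C|-2n)/(n(n+2))}$ or a similarly shaped expression, which one then checks is $\le 3/(n+2)$ for all $|C|\ge 2n$ (equality at $|C|=2n$ recovering the tight $5$-design case). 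Everything else is routine once these two checks are in place, and one should also note $t_{FL}=(3/(n+2))^{1/2}$ from $t_{FL}=t_{k,k}$ with $k=3$, which is the largest zero of $P_3^{(n)}(t)$, an odd polynomial with zeros $0,\pm(3/(n+2))^{1/2}$.
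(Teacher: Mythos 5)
Your proposal is correct and follows essentially the same route as the paper: apply the antipodal version of Theorem~\ref{lp-cr-designs} with $f(t)=(t^2-a)^2$, compute $f_0=a^2-\frac{2a}{n}+\frac{3}{n(n+2)}$ from the moments, and minimize the resulting bound over $a$. One small correction: the optimal parameter is $a=\frac{1}{n}-2\sqrt{\frac{n-1}{n(n+2)(|C|-2n)}}$ (with $|C|-2n$ in the denominator, not the numerator as in your tentative guess), but the derivative equation you set up does yield this value and, upon substitution, exactly the stated bound, with the admissibility check $0\le a\le 3/(n+2)$ holding since $a\le 1/n\le 3/(n+2)$ for $n\ge1$.
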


\begin{proof}
We apply Theorem \ref{lp-cr-designs} polynomial $f(t)=(t^2-a)^2$, where the
optimal value of $a \in [-t_{FL},t_{FL}]$ will be determined. We
have $f(t_i) \geq 0$ for $n+1 \leq i \leq |C|-n$. Then
(\ref{main_tc}) gives
\[ 2nf(\rho(C))=n[f(\rho(C))+f(-\rho(C))] \leq f_0|C|=\left(a^2-\frac{2a}{n}+\frac{3}{n(n+2)}\right)|C|. \]
Since $f(t)$ is positive and increasing in $[t_{FL},+\infty)$,
this implies \[ (\rho(C))^2 \leq
a+\left(\frac{|C|(a^2-2a/n+3/n(n+2))}{2n}\right)\,^{1/2}.\] The
right-hand side is minimized for
\[ a=\frac{1}{n}-2\sqrt{\frac{n-1}{n(n+2)(|C|-2n)}} \] which gives the
desired bound. We used Maple to deal with this case.
\end{proof}

{\bf{Acknowledgements}}
The authors thank to Arseniy Akopyan and Nikolai Nikolov for helpful discussions.
\thanks{The research of the first author was partially supported by Bulgarian NSF under project KP-06-N32/2-2019.
The research of the second author was supported, in part, by the National Scientific Program
''Information and Communication Technologies for a Single Digital Market in Science, Education and Security (ICTinSES)'',
NIS-3317, financed by the Bulgarian Ministry of Education and Science.}

%
%


\begin{thebibliography}{99}

\bibitem{BD1}
E. Bannai, R. Damerell, Tight spherical designs I,
{\it J. Math. Soc. Japan} 31, 199-207 (1979).

\bibitem{BD2}
E.Bannai, R.M.Damerell, Tight spherical designs II,
{\it J. London Math. Soc.} 21, 13-30 (1980).

\bibitem{BMV04}
E. Bannai, A. Minemasa, B. Venkov,
The nonexistence of certain tight spherical designs,
{\it Algebra i Analiz}, 16(4), 1-23 (2004) (in Russian);
English translation in {\it St. Petersburg Math. J.} 16, 609-625 (2005).


\bibitem{BRV13}
A. Bondarenko, D. Radchenko, M. Viazovska,
Optimal asymptotic bounds for spherical designs,
{\it Ann. Math.} 178(2), 443-452 (2013).

\bibitem{BRV15}
A. Bondarenko, D. Radchenko, M. Viazovska,
Well-separated spherical designs,
{\it Constr. Approx.} 41, 93-112 (2015).

\bibitem{BBD}
S. Boumova,  P. Boyvalenkov, D. Danev,
Necessary conditions for existence of some designs in polynomial metric spaces,
{\it Europ. J. Combin.} 20, 213-225 (1999).

\bibitem{BBKS}
S. Boumova, P. Boyvalenkov, H. Kulina, M. Stoyanova,
Polynomial techniques for investigation of spherical designs,
{\it Designs, Codes and Cryptography}, 51(3), 275-288, (2009).


\bibitem{Boy1}
P. G. Boyvalenkov,
Extremal polynomials for obtaining bounds for spherical codes and designs,
{\it Discr. Comp. Geom.} 14, 167-183 (1995).


\bibitem{BDHSS-DCC2019}
P. Boyvalenkov, P. Dragnev, D. Hardin, E. Saff, M. Stoyanova,
On spherical codes with inner products in prescribed interval,
{\it Designs, Codes and Cryptography} 87, 299-315 (2019).

\bibitem{BS05}
P. Boyvalenkov, M. Stoyanova,
Upper bounds on the covering radius of spherical designs,
in Proc. Intern. Workshop on Optimal Codes, Pamporovo (Bulgaria), June 17-23, 53-58 (2005).

\bibitem{DGS}
P. Delsarte, J.-M. Goethals, J. J. Seidel,
Spherical codes and designs,
{\it Geom. Dedicata} 6, 363-388 (1977).

\bibitem{FL}
G. Fazekas, V. I. Levenshtein,
On upper bounds for code distance and covering radius of designs in polynomial metric spaces,
{\it J. Comb. Theory A}, 70, 267-288 (1995).


\bibitem{Lev2}
V. I. Levenshtein,
Universal bounds for codes and designs,
Chapter 6 (499-648) in {\it Handbook of Coding Theory},
Eds. V.Pless and W.C.Huffman, Elsevier Science B.V., (1998).

\bibitem{NN}
S. Nikova, V. Nikov,
Improvement of the Delsarte bound for t-designs when it is not the best bound possible,
{\it Des., Codes Crypt.} 28, 201-222 (2003).

\bibitem{NV12}
G. Nebe, B. Venkov,
On tight spherical designs,
{\it Algebra i Analiz}, 24(3), 163-171 (2012) (in Russian);
English translation in {\it St. Petersburg Math. J.} 24, 485-491 (2013).


\bibitem{Sze} G. Szeg\"{o},
{\it Orthogonal polynomials},
{AMS Col. Publ.}, vol. 23, Providence, RI (1939).

\bibitem{Yud95}
V. A. Yudin,
Coverings of a sphere, and extremal properties of orthogonal polynomials,
{\it Discrete Math. Appl.} 5, 371-379 (1995).

\bibitem{Yud97}
V. A. Yudin,
Lower bounds for spherical designs,
{\it Izv. RAN, Ser. Mat.}, 61, 213-223 (1997) (in Russian);
English translation in {\it Izv. Math.}, 61, 673-683 (1997).

\end{thebibliography}
\end{document}